\DeclareMathAlphabet{\mathpzc}{OT1}{pzc}{m}{it}
\theoremstyle{plain}
\newtheorem{theorem}{Theorem}[section]
\newtheorem*{theorem*}{Theorem}
\newtheorem{maintheorem}{Theorem}
\newtheorem{lemma}[theorem]{Lemma}
\newtheorem{proposition}[theorem]{Proposition}
\newtheorem{corollary}[theorem]{Corollary}
\theoremstyle{definition}
\newtheorem{definition}[theorem]{Definition}
\newtheorem{remark}[theorem]{Remark}
\numberwithin{equation}{section}
\numberwithin{figure}{section}
\newcommand{\val}{\mathrm{val}}
\newcommand{\Res}{\mathrm{Res}} 
\newcommand{\Ind}{\mathrm{Ind}} 
\newcommand{\cInd}{\textrm{c-}\mathrm{Ind}}
\newcommand{\Hom}{\mathrm{Hom}}
\newcommand{\Rep}{\mathrm{Rep}}
\newcommand{\Z}{\mathbb{Z}} 
\newcommand{\Lie}{\mathrm{Lie}}
\newcommand{\bG}{\mathbf{G}}
\newcommand{\bS}{\mathbf{S}}
\newcommand{\bZ}{\mathbf{Z}}
\newcommand{\bH}{\mathbf{H}}
\newcommand{\bT}{\mathbf{T}}
\newcommand{\apart}{\mathscr{A}}
\newcommand{\buil}{\mathscr{B}}
\newcommand{\Aut}{\mathrm{Aut}}
\begin{document}
\setlength{\parindent}{0pt}
\title{Restricting admissible representations to fixed-point subgroups}
\author{Peter Latham}
\address{Department of Mathematics and Statistics, University of Ottawa, Ottawa, Canada}
\email{platham@uottawa.ca}

\author{Monica Nevins}
\address{Department of Mathematics and Statistics, University of Ottawa, Ottawa, Canada}
\email{mnevins@uottawa.ca}
\thanks{The second author's research is supported by a Discovery Grant from NSERC Canada.}
\date{\today}

\begin{abstract}
\noindent Given a $p$-adic group $G=\bG(F)$ and a finite group $\Gamma\subset\mathrm{Aut}_F(\bG)$ such that the fixed-point subgroup $\bG^\Gamma$ is reductive, we establish a relationship between constructions of types for $G$ and $G^\Gamma$ due to Yu, Kim--Yu and Fintzen, which generalizes the relationship between general linear and classical groups observed by Stevens. As an application, given an irreducible representation $\pi$ of $G$, we explicitly identify a number of the inertial equivalence classes occurring in the representation $\pi|_{G^{[\Gamma]}}$.
\end{abstract}
\keywords{$p$-adic groups, theory of types, fixed-point subgroups}
\subjclass{22E50}
\maketitle

Given a reductive $p$-adic group $G$, there are two distinct approaches to constructing types for $G$ (in the sense of \cite{BushnellKutzko1998}), each of which relies on differing hypotheses (and, whenever these two constructions overlap, they produce the same types). Firstly, one has the construction of Yu, Kim--Yu and Fintzen \cite{Yu2001,KimYu2017,Fintzen2021}, which is valid for an arbitrary tamely ramified group $G$, but which only produces types for every inertial equivalence class if one imposes additional hypotheses on the prime number $p$. This construction is notable in that it is uniform across all such $G$.

In contrast, the constructions of Bushnell--Kutzko, Stevens, and Blasco--Blondel, each of which tackles only a specific family of groups, come with the advantage of requiring no tameness assumptions on the prime $p$. In each, the key is the Bushnell--Kutzko construction of \emph{semisimple characters} for $\mathbf{GL}_n(F)$ \cite{BushnellKutzko1993,BushnellKutzko1999}, which are a family of characters of certain pro-$p$ subgroups of $\mathbf{GL}_n(F)$ satisfying a number of remarkable properties. The constructions of Stevens \cite{Stevens2005,Stevens2008}, and Blasco--Blondel \cite{BlascoBlondel2012} each rely on exploiting the relationships between $G$ and $\mathbf{GL}_n(F)$ in order to define a new family of semisimple characters by restriction of those of Bushnell--Kutzko. This approach by nature reveals a network of relationships between the theories of types for the various groups for which such a construction has been carried out. Nowadays such relationships fall under the umbrella term of \emph{endo-equivalence}, with the case of $\mathbf{GL}_n(F)$ and classical groups being just one particularly simple example of a more general phenomenon \cite{KurinczukSkodlerackStevens2021}.

Since the constructions of Yu \emph{et al.} treat each group in abstraction, such relationships among related groups do not immediately follow---while they do produce a family of semisimple characters (with a more straightforward construction than is seen in the Bushnell--Kutzko setting, due to the hypotheses on $p$), to date the only results relating these constructions have been between $G$ and its derived subgroup \cite{Bourgeois2020}. Our goal in this paper is to show that, in the setting of Yu \emph{et al.}'s construction, the relationship between semisimple characters for $\mathbf{GL}_n(F)$ and semisimple characters for classical groups admits a significant generalization.

Before stating our results, it is useful to quickly recap the approaches of \cite{Stevens2005} and \cite{BlascoBlondel2012}, which provide our motivating examples. In \cite{Stevens2005}, Stevens considers a classical group $G$, identified as the fixed points in $\tilde{G}=\mathbf{GL}_n(F)$ of a Galois involution $\gamma$. The Bushnell--Kutzko construction, along with a generalization due to Stevens in \emph{loc. cit.}, associates to each \emph{semisimple stratum} $(\frak{a},\beta)$ for $\tilde{G}$ a finite set $\tilde{\mathcal{C}}(\frak{a},\beta)$ of semisimple characters of a pro-$p$ open subgroup $\tilde{H}_+(\frak{a},\beta)$ of $\tilde{G}$. Stevens shows that if $(\frak{a},\beta)$ is \emph{skew}, \emph{i.e.} if $\frak{a}$ is $\gamma$-stable and $\beta$ is contained in the Lie algebra of $G$, then $\tilde{H}_+(\frak{a},\beta)$ is stable under the action of $\gamma$, in which case he is able to define $H_+(\frak{a},\beta)=\tilde{H}_+(\frak{a},\beta)^\gamma$ and equip this group with a set $\mathcal{C}(\frak{a},\beta)$ of semisimple characters by restriction. It transpires that the semisimple characters obtained in this way still have the desired properties, and this approach ultimately leads to a construction of types for classical groups \cite{Stevens2008}. Building upon this, in \cite{BlascoBlondel2012} Blondel and Blasco consider the case where $\mathbf{G}_2(F)$ is the fixed points under triality in $\mathbf{SO}_8(F)$; they establish the analogous relationships between semisimple characters, with this yielding a construction of supercuspidal representations of $\mathbf{G}_2(F)$.

The technical result underlying the present paper is a generalization of this relationship to the Yu setting, and to a wider class of groups. Specifically, we consider an arbitrary tamely ramified group $G$, equipped with a finite group $\Gamma$ of algebraic automorphisms, the order of which is coprime to $p$. Then (the connected component of) the fixed-point subgroup $G^\Gamma$ is a closed reductive subgroup of $G$ \cite{PrasadYu2002}. Here, the situation is rather different to that considered by Stevens, in that \emph{one already has} a construction of semisimple characters for both $G$ and $G^\Gamma$. Thus, rather than providing a new construction of types, our goal is to show that the existing constructions of types are related in a manner directly generalizing the situation of \cite{Stevens2005,BlascoBlondel2012}.

The construction of semisimple characters for the group $G$ proceeds by defining a family of \emph{truncated data} $\Sigma$.  To each such datum $\Sigma$, one may associate a pro-$p$  open subgroup $H_+(\Sigma)$ of $G$, together with a semisimple character $\theta_\Sigma$ of $H_+(\Sigma)$. We identify a family of truncated data which we call \emph{$\Gamma$-stable} that provide the natural generalization of the notion of \emph{skewness} considered in \cite{Stevens2005}, and prove the following result.

\begin{maintheorem}[{Theorem~\ref{thm:simplechars}}]
Suppose that $\Sigma$ is a $\Gamma$-stable truncated datum. Then the group $H_+(\Sigma)$ is stable under the action of $\Gamma$, and the pair $(H_+(\Sigma)^\Gamma,\theta_\Sigma)$ is a semisimple character for $G^\Gamma$.
\end{maintheorem}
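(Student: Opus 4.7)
The plan is to reduce the theorem to a sequence of compatibility statements between the Yu construction and the operation of taking $\Gamma$-fixed points. The underlying principle is that, since $|\Gamma|$ is coprime to $p$, averaging arguments apply both to the Moy--Prasad filtrations and to the associated Lie-algebra-to-group isomorphisms, so that forming $\Gamma$-fixed points commutes with each layer of the construction.

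First, I would unpack the notion of $\Gamma$-stability of $\Sigma$: the underlying tame twisted Levi sequence $(\bG^0 \subset \cdots \subset \bG^d)$ is componentwise $\Gamma$-stable, the point $x$ in the building is fixed by $\Gamma$, the depth sequence is preserved (automatic), and the quasi-characters $\phi_i$ are $\Gamma$-invariant. By \cite{PrasadYu2002} each $(\bG^i)^\Gamma$ is then reductive, and $x \in \buil((\bG^0)^\Gamma) = \buil(\bG^0)^\Gamma$, so that $((\bG^0)^\Gamma \subset \cdots \subset (\bG^d)^\Gamma)$ is a tame twisted Levi sequence in $\bG^\Gamma$. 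Together with the point $x$, the same depths, and the restrictions of the $\phi_i$ to the fixed-point Lie algebras, these data assemble into a truncated datum $\Sigma^\Gamma$ for $G^\Gamma$. The goal then becomes to identify $(H_+(\Sigma)^\Gamma, \theta_\Sigma|_{H_+(\Sigma)^\Gamma})$ with the semisimple character $(H_+(\Sigma^\Gamma), \theta_{\Sigma^\Gamma})$ produced by the construction on $G^\Gamma$.

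Next, I would establish the $\Gamma$-stability of $H_+(\Sigma)$ and the identity $H_+(\Sigma)^\Gamma = H_+(\Sigma^\Gamma)$. Since $H_+(\Sigma)$ is built from Moy--Prasad subgroups of the various $G^i$ at $x$, each of which is $\Gamma$-stable by the previous step, the whole subgroup is $\Gamma$-stable. For the fixed-point identification, the essential ingredient is that for any $\Gamma$-stable tame Levi $\bM \subset \bG$ and any $y \in \buil(\bM)^\Gamma$, one has $M_{y,s+}^\Gamma = (\bM^\Gamma(F))_{y,s+}$; this is a standard averaging consequence of $\gcd(|\Gamma|,p)=1$ applied to the Moy--Prasad isomorphism. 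Combining this with an Iwahori-type factorization argument --- needed to interchange the fixed-point functor with the product defining $H_+(\Sigma)$ --- yields the desired equality of subgroups.

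The main obstacle is the final step: verifying $\theta_\Sigma|_{H_+(\Sigma)^\Gamma} = \theta_{\Sigma^\Gamma}$. Yu's character is assembled from individual characters $\hat\phi_i$ obtained by transporting $\phi_i$ across the Moy--Prasad isomorphism $\Lie(G^i)_{x,s}/\Lie(G^i)_{x,s+} \cong (G^i)_{x,s}/(G^i)_{x,s+}$, and it is precisely at this level that $\Gamma$-equivariance must be carefully tracked: one must check that the isomorphism is $\Gamma$-equivariant and, upon averaging, induces the analogous isomorphism used to build $\theta_{\Sigma^\Gamma}$. The $\Gamma$-invariance of $\phi_i$ then forces $\hat\phi_i|_{(H^i_+)^\Gamma}$ to agree with the corresponding factor of $\theta_{\Sigma^\Gamma}$, and multiplying over $i$ gives the full equality. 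Once this identification is in place, the assertion that $(H_+(\Sigma)^\Gamma, \theta_\Sigma|_{H_+(\Sigma)^\Gamma})$ is a semisimple character for $G^\Gamma$ is automatic, as it is by construction the semisimple character attached to the truncated datum $\Sigma^\Gamma$.
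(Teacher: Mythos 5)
Your overall strategy is the paper's: form a fixed-point datum $\Sigma^\Gamma$, show $H_+(\Sigma)^\Gamma=H_+(\Sigma^\Gamma)$ by an inductive interchange of $(\cdot)^\Gamma$ with the product of Moy--Prasad subgroups, and match the characters layer by layer. But the proposal has genuine gaps at the two points where the theorem actually has content. First, you assert that the restricted data ``assemble into a truncated datum $\Sigma^\Gamma$ for $G^\Gamma$,'' and this is not automatic: the definition of a truncated datum requires each $X^i$ to be almost strongly stable and $\bG^{i+1}$-generic of depth $-r_i$ at $x$, and one must prove that these properties descend to $\bG^{[\Gamma]}$. The genericity transfer is the hardest step of the paper (Lemma~\ref{lem:ass}): one chooses a maximal torus $\bS\subset\bG^{[\Gamma]}$ with $\Gamma$-stable centralizer $\bT$, expresses each coroot $H_a$ for $a\in\Phi(\bG^{[\Gamma]},\bS)$ as an explicit multiple of $\sum_{\gamma}H_{\gamma\cdot\alpha}$, and uses $p\nmid 2|\Gamma|$ to conclude $\val(X(H_a))=\val(X(H_\alpha))=r$. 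Without this, the claim that $(H_+(\Sigma)^\Gamma,\theta_\Sigma)$ \emph{is a semisimple character} for $G^{[\Gamma]}$ --- i.e.\ that it arises from the construction applied to some datum of $G^{[\Gamma]}$ --- is unproved. Relatedly, you assume $((\bG^0)^{[\Gamma]}\subsetneq\cdots\subsetneq(\bG^d)^{[\Gamma]})$ is a twisted Levi sequence, but consecutive terms can coincide after taking fixed points; the paper must merge such blocks, replacing $(X^i,\ldots,X^{i+k})$ by $\sum_j X^j$ and $(\phi^i,\ldots,\phi^{i+k})$ by $\prod_j\phi^j$, and then re-verify genericity of the sum (Lemma~\ref{lem:sum}).

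Second, your account of the character comparison misidentifies where the difficulty lies. The $\Gamma$-equivariance of the Moy--Prasad isomorphism is not the issue; the issue is that $\hat\phi^i$ is built by extending $\psi\circ X^i$ \emph{trivially across a complement} $\frak{n}^i$, and this complement is defined as the sum of the nonzero isotypic components for $Z(\bG^i)^\circ$ upstairs but for the generally strictly larger torus $Z(\bG^{i,[\Gamma]})^\circ$ downstairs (the paper notes after Lemma~\ref{lem:twisted-levis} that $Z(\bG^i)^{[\Gamma]}\subsetneq Z(\bG^{i,[\Gamma]})^\circ$ in general). One must check that $(\frak{n}^i)^\Gamma$ coincides with the complement used in the $G^{[\Gamma]}$-construction; the paper does this by showing the $\Gamma$-fixed decomposition is again isotypic for the larger torus. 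Finally, note that $\Gamma$-invariance of the full characters $\phi^i$ is not part of the definition of $\Gamma$-stability and is not needed --- only the restrictions $\phi^i|_{G^{i,[\Gamma]}}$ enter. The group-level identity $H_+(\Sigma)^\Gamma=H_+(\Sigma^\Gamma)$, which you defer to ``an Iwahori-type factorization argument,'' is itself a nontrivial induction on filtration depth (Lemma~\ref{lem:thm1}), but at least you have correctly flagged it as a required step.
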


In fact, this is explicit: we are able to write down a datum $\Sigma^\Gamma$ for $G^\Gamma$ such that $(H_+(\Sigma)^\Gamma,\theta_\Sigma)=(H_+(\Sigma^\Gamma),\theta_{\Sigma^\Gamma})$.

Given such a natural relationship between the semisimple characters of $G$ and $G^\Gamma$, it is natural to ask to what extent this relationship extends to one between the corresponding types. 
Specifically, given a type $(J,\lambda)$ for $G$ which contains some $\Gamma$-stable semisimple character $(H_+,\theta)$, we may ask for a description of the irreducible components of $\lambda|_{J\cap G^\Gamma}$. The situation here is rather less elegant, due to the fact that not every irreducible component $\xi$ of $\lambda|_{J\cap G^{[\Gamma]}}$ will be a type. Nonetheless, we are able to adapt the arguments of \cite[\S 7]{LathamNevins2020} in order to 
associate to each such 
$\xi$ an inertial equivalence class $\frak{s}_\xi$ of representations of $G^\Gamma$, and prove the following.

\begin{maintheorem}[{Theorem~\ref{thm:main}}]
Let $\pi$ be a representation of $G$ which is generated by its $\lambda$-isotypic vectors. For each irreducible component $\xi$ of $\lambda|_{J\cap G^\Gamma}$, the representation $\pi|_{G^\Gamma}$ contains an irreducible subquotient of inertial support $\frak{s}_\xi$.
\end{maintheorem}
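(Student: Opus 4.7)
The plan is to combine Frobenius reciprocity with the construction of $\frak{s}_\xi$ (adapted from \cite[\S 7]{LathamNevins2020}) to locate an irreducible subquotient of $\pi|_{G^\Gamma}$ of the desired inertial support.

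First, since $\pi$ is generated by its $\lambda$-isotypic vectors, we have $\Hom_J(\lambda,\pi|_J)\neq 0$. I would choose a nonzero $\phi\in\Hom_J(\lambda,\pi|_J)$ and restrict it to the compact open subgroup $J\cap G^\Gamma$ of $G^\Gamma$. Decomposing $\lambda|_{J\cap G^\Gamma}$ into irreducible constituents and composing $\phi|_{J\cap G^\Gamma}$ with the projection onto the $\xi$-isotypic component yields a nonzero element of $\Hom_{J\cap G^\Gamma}(\xi,\pi|_{J\cap G^\Gamma})$. Frobenius reciprocity then produces a nonzero $G^\Gamma$-equivariant map
\[
\Psi\colon\cind_{J\cap G^\Gamma}^{G^\Gamma}\xi\longrightarrow\pi|_{G^\Gamma},
\]
whose image $\pi_\xi$ is simultaneously a nonzero subrepresentation of $\pi|_{G^\Gamma}$ and a quotient of $\cind_{J\cap G^\Gamma}^{G^\Gamma}\xi$. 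Any irreducible subquotient of $\pi_\xi$ is then an irreducible subquotient of both $\pi|_{G^\Gamma}$ and $\cind_{J\cap G^\Gamma}^{G^\Gamma}\xi$.

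The main step is to show that every irreducible subquotient of $\cind_{J\cap G^\Gamma}^{G^\Gamma}\xi$ has inertial support $\frak{s}_\xi$. By Theorem~\ref{thm:simplechars}, $\xi$ contains the semisimple character $\theta_{\Sigma^\Gamma}$ of $H_+(\Sigma^\Gamma)$, and $\frak{s}_\xi$ is built from this semisimple character together with the cuspidal constituent of $\xi$, as in the adaptation of \cite[\S 7]{LathamNevins2020}. To establish the claim I would control the $G^\Gamma$-intertwining of $\xi$ by exploiting the $\Gamma$-equivariance encoded in Theorem~\ref{thm:simplechars}, transferring information about the intertwining of $\lambda$ in $G$ down to $G^\Gamma$; combined with the Hecke algebra structure of the corresponding Yu-type for $\frak{s}_\xi$ in $G^\Gamma$, this would force every irreducible subquotient of $\cind_{J\cap G^\Gamma}^{G^\Gamma}\xi$ to lie in the single Bernstein block $\frak{s}_\xi$.

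The principal obstacle I anticipate lies in this intertwining analysis. At the depth-zero layer, $(J\cap G^\Gamma,\xi)$ need not itself be a type for $\frak{s}_\xi$, because the cuspidal constituent of $\xi$ may be only a summand of the cuspidal representation which in fact carries the type. Consequently, $\frak{s}_\xi$ cannot be read from $\xi$ via a direct Bushnell--Kutzko type criterion, and the matching must instead be established by tracing through the layer-by-layer structure of the Yu-construction and comparing the resulting covers in $G$ and in $G^\Gamma$, using Theorem~\ref{thm:simplechars} and the $\Gamma$-invariance of the intermediate data.
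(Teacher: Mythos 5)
Your opening moves are fine and consistent with the paper: since $\pi$ is generated by its $\lambda_\Sigma$-isotypic vectors, Frobenius reciprocity gives a nonzero map $\cind_{J(\Sigma^\Gamma)}^{G^{[\Gamma]}}(\xi\otimes\kappa_{\Sigma^\Gamma})\to\pi|_{G^{[\Gamma]}}$, and an irreducible quotient of its (finitely generated) image is an irreducible subquotient of $\pi|_{G^{[\Gamma]}}$. The gap is in what you call the main step. The claim you need is that every irreducible representation of $G^{[\Gamma]}$ containing $\xi\otimes\kappa_{\Sigma^\Gamma}$ has inertial support $\frak{s}_\xi$, and your proposed route to it --- transferring intertwining information for $\lambda_\Sigma$ from $G$ down to $G^{[\Gamma]}$ --- does not work: restriction to a subgroup can only enlarge intertwining, so the $G$-intertwining of $\lambda_\Sigma$ gives no upper bound on the $G^{[\Gamma]}$-intertwining of $\xi\otimes\kappa_{\Sigma^\Gamma}$; and even a complete computation of that intertwining would not identify the inertial class in the case you yourself flag as problematic, namely when the depth-zero component $\xi'$ of $\xi$ is non-cuspidal. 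You correctly diagnose the obstacle but then defer its resolution to an unspecified ``layer-by-layer'' comparison of covers, which is precisely the missing content.

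The actual resolution is the construction carried out immediately before the theorem (adapting \cite[\S 7.2--7.5]{LathamNevins2020}): from the cuspidal support $(\mathsf{L}_\xi',\zeta_\xi')$ of $\xi'$ one produces a point $u_\xi\in\buil(M^{0,[\Gamma]})$, a subgroup $J(\Sigma_\xi^\Gamma)\subset J(\Sigma^\Gamma)$, and a genuine $\frak{s}_\xi$-type $\lambda_{\Sigma_\xi^\Gamma}=\zeta_\xi\otimes\kappa_{\Sigma^\Gamma}$ satisfying $\Hom_{J(\Sigma_\xi^\Gamma)}(\lambda_{\Sigma_\xi^\Gamma},\xi\otimes\kappa_{\Sigma^\Gamma})\neq 0$. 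Granting this, your main step is a one-line consequence: any irreducible $V$ containing $\xi\otimes\kappa_{\Sigma^\Gamma}$ contains $\lambda_{\Sigma_\xi^\Gamma}$ upon further restriction to the compact open subgroup $J(\Sigma_\xi^\Gamma)$ (restriction to a compact open subgroup is semisimple), hence lies in $\Rep^{\frak{s}_\xi}(G^{[\Gamma]})$. The paper's own proof then bypasses the compact induction altogether: $\pi$ contains a copy of $\lambda_{\Sigma_\xi^\Gamma}$, so $\pi|_{G^{[\Gamma]}}$ projects nontrivially onto the Bernstein block $\Rep^{\frak{s}_\xi}(G^{[\Gamma]})$, and a nonzero finitely generated object of that block admits an irreducible quotient, necessarily of inertial support $\frak{s}_\xi$. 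Without the intermediate pair $(J(\Sigma_\xi^\Gamma),\lambda_{\Sigma_\xi^\Gamma})$, your argument does not close.
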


These two results give a uniform framework for some important questions of branching to reductive subgroups that can otherwise only be treated case-by-case.  For example, taking $\Gamma$ to be a finite-order inner automorphism of $\bG$ yields information about branching to some Levi (and twisted Levi) subgroups $\bG^{\Gamma}$.  On the other hand, working with a group $\Gamma$ of outer automorphisms is a key tool in understanding representations of exceptional groups.

Several interesting questions remain open.  One is to classify the semisimple characters of $G^{\Gamma}$ obtained through this construction; one obstacle to lifting arbitrary truncated data from $G^{\Gamma}$ to $G$ relates to satisfying the genericity condition (iv) of Definition~\ref{def:td}.
 For another, the two hypotheses on $\Gamma$---that it is finite, and that its order is prime to $p$---are used     in the proofs of most of the results in Section~\ref{sec:gamma-stable}, through the work of \cite{PrasadYu2002} and \cite{Portilla2014}.  It would be particularly interesting to relax the first hypothesis, at the expense of some generality, to be able to say more about restrictions to more general subgroups of $G$.   

This paper is organized as follows.  We establish our notation in Section~\ref{S:notation} and define (truncated) data as well as their associated semisimple characters and types in Section~\ref{S:data}, making use of several results from \cite{Portilla2014} and \cite{Fintzen2021}. In Section~\ref{sec:gamma-stable} we define the notion of $\Gamma$-stable truncated data, and use these to prove Theorem~\ref{thm:simplechars} in Section~\ref{S:simplechars}.  We conclude in Section~\ref{S:HM} with an application to restrictions of representations, in the form of Theorem~\ref{thm:main}.

\section{Notation} \label{S:notation}

Let $F$ be a locally compact field, complete with respect to a non-archimedean discrete valuation $\nu$, with ring of integers $\frak{o}$, maximal ideal $\frak{p}$, and finite residue field $\frak{f}=\frak{o}/\frak{p}$ of cardinality $q=p^r$.  We fix a choice of additive character $\psi \colon F \to \mathbb{C}^\times$ that is trivial on $\frak{p}$ but non-trivial on $\frak{o}$.  Write $\mathrm{val}$ for a valuation of $F$ and, for any finite extension $E$ of $F$, denote also by $\mathrm{val}$ its unique $F$-normalized extension to $E$.   We let $\bar{F}$ denote a separable algebraic closure of $F$.

Let $\bG$ denote a connected reductive group defined over $F$, and  write $G=\bG(F)$ for its group of rational points, equipped with the locally profinite topology. We use similar notation for closed subgroups without comment. The Lie algebra of the algebraic group $\bG$ will be denoted by $\Lie(\bG)$, 
with its $F$-rational points being $\frak{g}=\Lie(\bG)(F)$; we denote the duals of these algebras by $\Lie(\bG)^*$ and $\frak{g}^*$, respectively.  For any suitable object $*$, write $\mathrm{Cent}_G(*)$ for the centralizer in $G$ of $*$ and $Z(*)$ for its centre.

We write $\buil(G)=\buil(\bG,F)$ for the (extended) Bruhat--Tits building of $G$. Given a point $x\in\buil(G)$, we write $G_x$ for the subgroup of $G$ which fixes $x$; this group carries the Moy--Prasad filtration $G_{x,r}$ for $r\geq 0$; the group $G_{x,0}$ is the parahoric subgroup associated to $x$, and for any $r>0$ the group $G_{x,r}$ is a pro-$p$ normal subgroup of $G_{x}$. For any $0<s<r$ we abbreviate $G_{x,s:r}=G_{x,s}/G_{x,r}$ and for any $r\geq 0$, we abbreviate $G_{x,r+}=\bigcup_{s>r}G_{x,s}$.
In fact, as established in \cite{Yu2015}, for $r\geq 0$ the groups $G_{x,r}$ are schematic, in the sense that there exist canonical smooth connected affine $\frak{o}$-models $\bG_{x,r}$ of $\bG$ such that $\bG_{x,r}(\frak{o})=G_{x,r}$.  As a special case, due to Bruhat and Tits, note that the special fibre of $\bG_{x,0}$ is a connected reductive $\frak{f}$-algebraic group whose group of $\frak{f}$ -points is $G_{x,0:0+}$.

 Similarly, for any $r\in \mathbb{R}$ and $x\in \buil(G)$ we have a filtration of $\frak{g}$ by $\frak{o}$-lattices $\frak{g}_{x,r}$ and $\frak{g}_{x,r+}$; this induces a corresponding filtration on $\frak{g}^*$ by setting $\frak{g}_{x,r}^*=\{X\in\frak{g}^*\ |\ \forall Y\in\frak{g}_{x,-r+},\ X(Y)\in\frak{p}\}$.  For any $0<s<r\leq 2s$ the Moy--Prasad isomorphism identifies $\frak{g}_{x,s:r}= \frak{g}_{x,s}/\frak{g}_{x,r}$ with $G_{x,s:r}$.  

We fix a finite subgroup $\Gamma\subset\Aut_F(\bG)$, and assume that $p$ is coprime to the order of $\Gamma$. Via the action of $\Gamma$ on $\bG$, we obtain natural actions of $\Gamma$ on $G$, on $\buil(G)$, on $\frak{g}$, and on $\frak{g}^*$. Given a $\Gamma$-set $X$, we write $X^\Gamma$ for the set of points $x\in X$ which are fixed by $\Gamma$. 

To ease notation, if $\mathbf{H}$ is a $\Gamma$-stable subgroup of $\bG$, then we write $\mathbf{H}^{[\Gamma]} = (\mathbf{H}^\Gamma)^\circ$ for the connected component of $\mathbf{H}^\Gamma$, and set $H^{[\Gamma]} = \mathbf{H}^{[\Gamma]}(F)$, to be distinguished from $H\cap G^{[\Gamma]}$ as necessary.

By  \cite{PrasadYu2002},  $\bG^{[\Gamma]}$ is a closed reductive subgroup of $\bG$, whence $G^{[\Gamma]}$ is a closed subgroup of $G$; moreover, there is a family of $G^{[\Gamma]}$-equivariant embeddings $\buil(G^{[\Gamma]})\hookrightarrow\buil(G)$, each of which identifies $\buil(G^{[\Gamma]})$ with $\buil(G)^\Gamma$. We fix, once and for all, such a choice of embedding.


Finally, throughout the paper we impose the hypothesis that $\bG$ splits over a tamely ramified extension of $F$, and that $p$ is coprime to the orders of the (absolute) Weyl groups of both $\bG$ and $\bG^{[\Gamma]}$. This hypothesis is required in order to apply the results of \cite{Fintzen2021}, and it ensures that each twisted Levi subgroup of $\bG$ that is defined over $F$ splits over a tamely ramified extension.


\section{Data and semisimple characters}\label{S:data}

In this section, we recall the fundamental notion of a datum for constructing a type, following the approach of \cite{Fintzen2021} (although with our indices running in the opposite direction, following the convention from the initial definition in \cite{Yu2001}).  We then introduce various groups and representations associated to a datum, 
choosing notation similar to that found in \cite[\S 5]{LathamNevins2020}.

\begin{definition} \label{def:td}
A \emph{truncated datum} for $G$ is a tuple $\Sigma=(\vec{G},x,\vec{r},\vec{X}, \vec{\phi})$ consisting of the following objects.
\begin{enumerate}[(i)]
\item A sequence $\vec{G}=(\bG^0\subsetneq \bG^1\subsetneq\cdots\subsetneq \bG^d\subseteq \bG^{d+1}=\bG)$ of twisted Levi subgroups of $\bG$, each of which is defined over $F$ (and splits over a tamely ramified extension of $F$).
\item A point $x \in \buil(G^0)\subset \buil(G^1)\subset \cdots \subset \buil(G)$, relative to a fixed choice of embeddings of buildings.
\item A sequence $\vec{r}=(0<r_0<r_1<\cdots<r_{d-1}<r_d)$ of real numbers; we set $s_i=r_i/2$ and $s_{-1}=0$ throughout.
\item A sequence $\vec{X} = (X^0, X^1, \ldots, X^{d})$ consisting of elements $X^i\in(\frak{g}^i)^*$ which are \emph{almost strongly stable and $\bG^{i+1}$-generic of depth $-r_i$  at $x \in \buil(G^{i+1})$}.  (We recall these terms from \cite[Def. 3.2, Def. 3.5]{Fintzen2021} in the next section.)  Further, for each $0\leq i\leq d$ we require $\mathrm{Cent}_{G^{i+1}}(X^i)=G^i$.
\item A sequence $\vec{\phi} = (\phi^0, \phi^1, \ldots, \phi^{d})$ of characters $\phi^i$ of $G^i$ of depth $r_i$ realized by $X^i$, that is, such that via the Moy-Prasad isomorphism $G^i_{x,s_i+:r_i+}\cong \frak{g}^i_{x,s_i+:r_i+}$, the restriction of $\phi^i$ to $G^i_{x,s_i+}$ is given by $\psi\circ X^i$.
\end{enumerate}
\end{definition}

\begin{remark}
Omitting (v) from the definition above gives the ``truncated extended data" of \cite{Fintzen2021}, which parametrize strictly larger collections of representations. 
It is convenient for us to include the sequence of characters $\vec{\phi}$ so that our truncated data directly parametrize semisimple characters, as below.
\end{remark}

\begin{definition}\label{def:datum}
A truncated datum $\Sigma$ is completed to a datum $\tilde{\Sigma}=(\Sigma;\sigma)$ if in addition to (i) through (v) above, we have
\begin{enumerate}
\item[(vi)] $\sigma$ is an irreducible representation of $G^0_{x}$ that is trivial on $G^0_{x,0+}$ and whose restriction to $G^0_{x,0}$ is a sum of cuspidal representations inflated from $G^0_{x,0:0+}$. 
\end{enumerate}
\end{definition}

\begin{remark}
Given a datum $\tilde{\Sigma}$, 
by omitting reference to the sequence $\vec{X}$ and letting $M^0$ denote the Levi subgroup of $G^0$ associated to the point $x$ by \cite[\S 6.3]{MoyPrasad1996}, one recovers a datum in the sense of \cite{KimYu2017}. A consequence of the main result of \cite{Fintzen2021} is that every irreducible representation of $G$ contains a datum satisfying (i)--(vi).
\end{remark}

Let us now define two key compact open subgroups of $G$, each equipped with an irreducible representation, derived from these data.   Note that in \cite{Yu2001}, our group $H_+(\Sigma)$ is denoted $K_+$ and our group $J(\Sigma)$ is denoted ${}^\circ K$.

First, to a truncated datum $\Sigma$ one may attach the pro-$p$ group
\begin{equation}\label{eq:H+}
H_+(\Sigma)=G_{x,0+}^0G_{x,s_{0}+}^1\cdots G_{x,s_{d}+}^{d+1}.
\end{equation}
To define the \emph{semisimple character} $\theta_\Sigma$ of $H_+(\Sigma)$, we proceed as follows.

For each $i$ such that $0\leq i \leq d$, let $\bZ = Z(\bG^i)^{\circ}$; then $Z=\bZ(F)$ decomposes $\frak{g}$ into a direct sum of isotypic subspaces such that $\frak{g}^{i}$ is the $0$-isotypic subspace.  Write $\frak{n}^i$ for the sum  of the remaining isotypic components. 
 
From the pair $(X^i,\phi^i)$ we construct a character $\hat{\phi}^i$ of $G^i_{x}G_{x,s_i+}$ following \cite[\S4]{Yu2001}:
since $X^i$ is of depth $-r_i$,  
 $\psi \circ X^i$ defines a character of $\frak{g}^{i}_{x,s_i+}$ that is trivial on $\frak{g}^i_{x,r_i+}$.  Define a character $\tilde{\phi}^i$ of $G_{x,s_i+}$ by extending $\psi \circ X^i$ trivially across 
  $\frak{n}^i\cap\frak{g}_{x,s_i+}$, and then using the Moy--Prasad isomorphism to lift this to the group.
Since $\tilde{\phi}^i$ and $\phi^i$ agree on $G^i_{x}\cap G_{x,s_i+}$
we may define, for any $g\in G^i_x$ and $h\in G_{x,s_i+}$, a character $\hat{\phi}^i$ of $G^i_xG_{x,s_i+}$ by
\begin{equation}\label{eq:phihat}
\hat{\phi}^i(gh) =  \phi^i(g)\tilde{\phi^i}(h).
\end{equation}
Write again $\hat{\phi}^i$ for its restriction to the group $H_+(\Sigma) \subset G^i_xG_{x,s_i+}$.  The semisimple character $\theta_\Sigma$ is then defined as
$\theta_\Sigma=\prod_{i=0}^d\hat{\phi}^i.$

Next, to $\Sigma$ one may also associate the group
\[J(\Sigma)=G^0_{x}G_{x,s_0}^1\cdots G_{x,s_{d}}^{d+1},
\]
which contains $H_+(\Sigma)$ as an open normal subgroup. The semisimple character $\theta_\Sigma$ admits a \emph{Heisenberg--Weil lift} to $J(\Sigma)$.  This is a certain irreducible representation $\kappa_\Sigma$ of $J(\Sigma)$ which restricts to $H_+(\Sigma)$ as a sum of copies of $\theta_\Sigma$; we refer to \cite[\S 2.3, 2.4]{HakimMurnaghan2008} for details and will briefly recall the construction where needed in Section~\ref{S:HM}. 


If we now complete $\Sigma$ to a datum $\tilde{\Sigma}$ by choosing a representation $\sigma$, then we may also define a representation $\lambda_{\tilde{\Sigma}}=\sigma\otimes\kappa_\Sigma$ of $J(\Sigma)$. The fundamental fact  is that the pair $(J(\Sigma),\lambda_{\tilde{\Sigma}})$ is an $\frak{s}$-type, for some inertial equivalence class $\frak{s}=\frak{s}_{\Sigma}$ of $G$ 
(in the sense of \cite{BushnellKutzko1998}). Moreover, under our hypotheses on $p$, every irreducible representation of $G$ contains a type of the form $(J(\Sigma),\lambda_{\tilde{\Sigma}})$ for some datum ${\tilde{\Sigma}}$ \cite{Fintzen2021}.

\section{$\Gamma$-stable truncated data}\label{sec:gamma-stable}

Let $\Gamma \subset \mathrm{Aut}_F(\bG)$ be as in Section~\ref{S:notation}.  We begin with some straightforward lemmas.

\begin{lemma}\label{lem:tori}
Let $\bS\subset\bG^{[\Gamma]}$ be a maximal $F$-torus of $\bG^{[\Gamma]}$. Then $\bT = \mathrm{Cent}_{\bG}(\bS)$ is a $\Gamma$-stable maximal $F$-torus of $\bG$.  
\end{lemma}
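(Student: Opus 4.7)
The plan is to verify in turn that $\bT = \mathrm{Cent}_{\bG}(\bS)$ is defined over $F$, is $\Gamma$-stable, and is a maximal torus of $\bG$. The first two points are essentially formal. Since $\bS$ is an $F$-subgroup of the $F$-group $\bG$, its centralizer is an $F$-subgroup; and since $\bS\subseteq\bG^\Gamma$ means each $\gamma\in\Gamma$ fixes $\bS$ pointwise, we get $\gamma(\bT)=\mathrm{Cent}_{\bG}(\gamma(\bS))=\mathrm{Cent}_{\bG}(\bS)=\bT$.

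The substantive claim is that $\bT$ is a maximal torus of $\bG$. Since centralizers of tori in connected reductive groups are connected reductive (Levi) subgroups containing a maximal torus of the ambient group, $\bT$ is already known to be a Levi subgroup of $\bG$ of the same rank as $\bG$; thus it suffices to prove that $\bT$ is abelian. To this end, I would consider the fixed-point subgroup $\bT^{[\Gamma]}$: by the Prasad--Yu theorem cited in Section~\ref{S:notation}, it is a connected reductive subgroup of $\bG^{[\Gamma]}$. Since it centralizes $\bS$, it is contained in $\mathrm{Cent}_{\bG^{[\Gamma]}}(\bS)=\bS$, using that the maximal torus $\bS$ of the connected reductive group $\bG^{[\Gamma]}$ is self-centralizing. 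Combined with the obvious reverse inclusion, this yields $\bT^{[\Gamma]}=\bS$.

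The main obstacle is to bootstrap from $\bT^{[\Gamma]}$ being a torus to $\bT$ itself being a torus, since in general the fixed-point subgroup of a semisimple group under a coprime-order automorphism can be a torus without the group being one. My plan is to select a $\Gamma$-stable maximal $F$-torus $\bT_0$ of $\bT$ (which exists by general theory and is automatically a maximal torus of $\bG$, as $\bT$ is Levi). By Prasad--Yu, $(\bT_0^{[\Gamma]})^\circ$ is a maximal torus of $\bG^{[\Gamma]}$, and comparison with the containment $(\bT_0^{[\Gamma]})^\circ \subseteq \bT^{[\Gamma]}=\bS$ and the maximality of $\bS$ forces $(\bT_0^{[\Gamma]})^\circ=\bS$. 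The final step, $\bT=\bT_0$, then reduces to ruling out roots of $(\bG,\bT_0)$ that are trivial on $\bS$; I would deduce this from the Prasad--Yu description of the roots of $\bG^{[\Gamma]}$ relative to $\bS$ as the nonzero restrictions of roots of $(\bG,\bT_0)$. This last ingredient, quoted from \cite{PrasadYu2002}, is the technical crux.
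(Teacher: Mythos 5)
Your reductions are sound up to the last step: $F$-rationality, $\Gamma$-stability, the identification $\bT^{[\Gamma]}=\bS$ via self-centralization of $\bS$ in $\bG^{[\Gamma]}$, and the reformulation ``$\bT=\bT_0$ if and only if no root of $(\bG,\bT_0)$ is trivial on $\bS$'' are all correct, and the last is exactly equivalent to the paper's reformulation that the derived group of $\mathrm{Cent}_\bG(\bS)^\circ$ must be trivial. The gap is in the justification of that final step. The statement you appeal to --- that $\Phi(\bG^{[\Gamma]},\bS)$ consists of the nonzero restrictions of roots of $(\bG,\bT_0)$ --- is first of all not the correct description (see Lemma~\ref{lem:roots}: a nonzero restriction $\alpha|_{\bS}$ is a root of $\bG^{[\Gamma]}$ only when $\Gamma_\alpha$ has a nonzero fixed vector on $\Lie(\bG)_\alpha$). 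More importantly, even the correct description is silent about precisely the roots you need to exclude, namely those with $\alpha|_{\bS}=0$: such a root simply fails to contribute to $\Phi(\bG^{[\Gamma]},\bS)$, and no contradiction results. At the level of weight spaces, $\Lie(\bT)^\Gamma=\Lie(\bS)$ is compatible with $\bigl(\bigoplus_{\alpha|_{\bS}=0}\Lie(\bG)_\alpha\bigr)^\Gamma=0$ and $\bT\supsetneq\bT_0$, so the desired contradiction cannot be extracted from the restriction map on roots alone. The missing ingredient is a positivity statement: a non-trivial connected semisimple group carrying an action of a finite group of order prime to $p$ has fixed-point subgroup of positive dimension. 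The paper applies this (via \cite{PrasadYu2002}) to the derived group $\bH'$ of $\bH=\mathrm{Cent}_\bG(\bS)^\circ$: if $\bH'$ were non-trivial, $(\bH')^{[\Gamma]}$ would contain a non-trivial torus $\bS'$, which centralizes $\bS$ and meets it in a finite group, so that $\bS\bS'$ would be a torus of $\bG^{[\Gamma]}$ of rank strictly greater than that of $\bS$, contradicting maximality.

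Two smaller points. The assertion that $(\bT_0^\Gamma)^\circ$ is a maximal torus of $\bG^{[\Gamma]}$ for an arbitrary $\Gamma$-stable maximal torus $\bT_0$ is false in general: take $\bG=\mathbf{SL}_2$ and $\Gamma$ generated by conjugation by a Weyl group representative, so that the diagonal torus is $\Gamma$-stable with finite fixed points while $\bG^{[\Gamma]}$ has rank one. Here the error is harmless only because $\bS\subseteq\bT_0$ already forces $\bS\subseteq(\bT_0^\Gamma)^\circ\subseteq\bT^{[\Gamma]}=\bS$ without any appeal to \cite{PrasadYu2002}. Finally, the existence of a $\Gamma$-stable maximal torus of $\bT$ \emph{defined over $F$} is not ``general theory'' --- producing one is essentially the content of the lemma being proved --- so you should either pass to $\bar F$, where Steinberg's theorem applies and where it suffices to test whether $\bT$ is a torus, or avoid choosing $\bT_0$ altogether, as the paper does.
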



\begin{proof}
Let $\bH=(\mathrm{Cent}_\bG(\bS))^\circ$. Then $\bH$ is a twisted Levi subgroup of $\bG$, hence a closed connected reductive $F$-subgroup. Since $\bS$ is fixed pointwise by $\Gamma$, $\bH$ is $\Gamma$-stable. By \cite{PrasadYu2002}, $\bH^{[\Gamma]}$ is a connected reductive group containing $\bS$ in its centre.

Suppose that the derived group $\bH'=[\bH,\bH]$ were non-trivial. This group is $\Gamma$-stable, and so again $(\bH')^{[\Gamma]}$ is a (non-trivial) reductive group. Choose a non-trivial $F$-torus $\bS'\subset(\bH')^{[\Gamma]}$. Then $\bS'$ commutes with $\bS$ and $\bS'\cap\bS$ is finite, so that the torus $\bS'\bS$ is of rank strictly larger than $\bS$ and yet contained in $\bH^{[\Gamma]}\subset\bG^{[\Gamma]}$, which contradicts the maximality of $\bS$. Therefore $\bH=\bT$ is a $\Gamma$-stable torus. 
\end{proof}

\begin{remark}\label{rem:split}
Note that since $p$ is coprime to the orders of the absolute Weyl groups of both $\bG$ and $\bG^{[\Gamma]}$, and $\bG$ splits over a tamely ramified extension of $F$, by \cite[Lem. 2.2]{Fintzen2021} this implies that every $F$-rational twisted Levi subgroup of $\bG$ or of $\bG^{[\Gamma]}$ splits over a tamely ramified extension of $F$.  Thus the tori $\bS$ and $\bT$ of the lemma split over a tamely ramified extension, but as remarked by \cite{Portilla2014}, it is not generally true that $\bT$  splits over $F$  even if $\bS$ does.
\end{remark}

Continuing in the setting of Lemma~\ref{lem:tori}, let $\Phi(\bG,\bT)$ denote the set of (absolute) roots of $\bG$ relative to $\bT$ and 
$\Phi(\bG^{[\Gamma]},\bS)$ the set of (absolute) roots of $\bG^{[\Gamma]}$ relative to $\bS$.  The elements $a$ of $\Phi(\bG^{[\Gamma]},\bS)$ occur as restrictions to $\bS$ of certain roots of $\bG$ relative to $\bT$.  
To describe these, note that the action of $\Gamma$ on $\bT$ and $\bG$ induces an action on the set of roots, and in consequence for each root $\alpha$ a linear action of the stabilizer $\Gamma_\alpha$ on the root space $\Lie(\bG)_\alpha$.  

\begin{lemma}\label{lem:roots}
The roots of $\bG^{[\Gamma]}$ relative to $\bS$ are the restrictions of those roots $\alpha$ of $\bG$ relative to $\bT$ for which $\Gamma_\alpha$ admits a nonzero fixed vector on $\Lie(\bG)_\alpha$, that is,
$$
\Phi(\bG^{[\Gamma]},\bS)=\{ \alpha |_{\bS} \ | \ \alpha \in \Phi(\bG,\bT), \ \Lie(\bG)_\alpha^{\Gamma_{\alpha}} \neq \{0\} \}.
$$
\end{lemma}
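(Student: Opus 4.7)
The plan is to compute $\bS$-weight spaces in $\Lie(\bG^{[\Gamma]})$ from the $\bT$-root space decomposition of $\Lie(\bG)$, then read off which nonzero characters of $\bS$ appear as weights.

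First I would invoke the standing hypothesis $p\nmid|\Gamma|$ to identify $\Lie(\bG^{[\Gamma]})=\Lie(\bG)^{\Gamma}$. Since $\bT$ is $\Gamma$-stable by Lemma~\ref{lem:tori}, the decomposition
\[
\Lie(\bG)=\Lie(\bT)\oplus\bigoplus_{\alpha\in\Phi(\bG,\bT)}\Lie(\bG)_\alpha
\]
is $\Gamma$-equivariant, with each $\gamma\in\Gamma$ sending $\Lie(\bG)_\alpha$ to $\Lie(\bG)_{\gamma\cdot\alpha}$; and because $\bS$ is fixed pointwise by $\Gamma$, all roots in a single $\Gamma$-orbit restrict to the same character of $\bS$. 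Averaging over $\Gamma$-orbits using $p\nmid|\Gamma|$, for each orbit $\Gamma\cdot\alpha$ one obtains a canonical isomorphism
\[
\Bigl(\bigoplus_{\beta\in\Gamma\cdot\alpha}\Lie(\bG)_\beta\Bigr)^{\!\Gamma}\cong\Lie(\bG)_\alpha^{\Gamma_\alpha},
\]
so that the orbit's contribution to $\Lie(\bG^{[\Gamma]})$ lies in the $\alpha|_\bS$-weight space and is nonzero exactly when $\Lie(\bG)_\alpha^{\Gamma_\alpha}\neq\{0\}$. Collecting contributions, a character $a\in X^*(\bS)$ appears as a nontrivial weight of $\bS$ on $\Lie(\bG^{[\Gamma]})$ if and only if some $\alpha\in\Phi(\bG,\bT)$ satisfies $\alpha|_\bS=a$ and $\Lie(\bG)_\alpha^{\Gamma_\alpha}\neq\{0\}$; for $a\neq 0$ this is exactly the statement of the lemma.

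The main delicate point is the consistency check at $a=0$: one must rule out that $\alpha|_\bS=0$ for some nonzero $\alpha\in\Phi(\bG,\bT)$ with $\Lie(\bG)_\alpha^{\Gamma_\alpha}\neq\{0\}$, since otherwise the right-hand set would contain a zero character. This follows from the maximality of $\bS$ in $\bG^{[\Gamma]}$: one has $\mathrm{Cent}_{\bG^{[\Gamma]}}(\bS)=\bS$, so the $\bS$-weight-zero subspace of $\Lie(\bG^{[\Gamma]})$ equals $\Lie(\bS)=\Lie(\bT)^\Gamma$, leaving no room for extra contributions of nonzero $\bT$-weight. The other ingredient requiring the tameness/coprimality hypotheses is the opening identification $\Lie(\bG^{[\Gamma]})=\Lie(\bG)^{\Gamma}$, and this is the principal input from the cited work surrounding Lemma~\ref{lem:tori}.
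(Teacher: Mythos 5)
Your argument is correct and follows essentially the same route as the paper's: decompose $\Lie(\bG)$ into $\Gamma$-orbits of $\bT$-root spaces, observe that the $\Gamma$-action on $\bigoplus_{\beta\in\Gamma\cdot\alpha}\Lie(\bG)_\beta$ is induced from the $\Gamma_\alpha$-action on $\Lie(\bG)_\alpha$, and use averaging (valid since $p\nmid|\Gamma|$) to see that an orbit contributes to $\Lie(\bG^{[\Gamma]})$ exactly when $\Lie(\bG)_\alpha^{\Gamma_\alpha}\neq\{0\}$. Your only addition is the explicit check that no $\alpha\in\Phi(\bG,\bT)$ restricts to the trivial character of $\bS$, which the paper leaves implicit; it follows at once from Lemma~\ref{lem:tori}, since $\mathrm{Cent}_{\bG}(\bS)=\bT$ forces the $\bS$-weight-zero subspace of $\Lie(\bG)$ to be $\Lie(\bT)$.
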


\begin{proof}
Let $\Gamma \cdot \alpha$ denote the orbit of a root $\alpha \in \Phi(\bG,\bT)$ and let $a=\alpha|_{\bS}$ be the common restriction to $\bS$.  The action of $\Gamma$ on $\oplus_{\beta \in \Gamma \cdot \alpha}\Lie(\bG)_\beta$ is induced from the action of $\Gamma_\alpha$ on $\Lie(\bG)_\alpha$, implying the result.
In particular, for any nonzero vector $Y$  in $\Lie(\bG)_\alpha^{\Gamma_{\alpha}}$, taking the average over its $\Gamma$-orbit 
$
\overline{Y}=\frac{1}{[\Gamma:\Gamma_\alpha]}\sum_{\gamma\in \Gamma/\Gamma_\alpha} \gamma\cdot Y
$
yields a $\Gamma$-fixed sum of linearly independent vectors, so a nonzero vector in $\Lie(\bG^{[\Gamma]})_{a}$.
\end{proof}


Our next lemmas relate to $\Lie(G)^*$.

\begin{lemma}\label{lem:dual-embedding}
There is a canonical identification $\Lie(\bG^{[\Gamma]})^*=(\Lie(\bG)^*)^\Gamma$.
\end{lemma}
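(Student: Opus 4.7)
The plan is to reduce this to a purely linear-algebraic statement and then dispatch that by averaging. First I would establish the Lie algebra identification $\Lie(\bG^{[\Gamma]}) = \Lie(\bG)^\Gamma$. Since $\bG^{[\Gamma]} = (\bG^\Gamma)^\circ$, we have $\Lie(\bG^{[\Gamma]}) = \Lie(\bG^\Gamma)$, and the latter equals $\Lie(\bG)^\Gamma$ because $|\Gamma|$ is coprime to $p$: this is the content of the infinitesimal linear reductivity statement already used implicitly via \cite{PrasadYu2002}, or one can argue directly that the scheme-theoretic fixed points are smooth under this hypothesis. At this point, what remains is the claim $(\Lie(\bG)^\Gamma)^* = (\Lie(\bG)^*)^\Gamma$, a statement about a finite-dimensional $\Gamma$-module $V = \Lie(\bG)$ over $F$.

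Next, I would introduce the Reynolds/averaging operator
\[
e = \frac{1}{|\Gamma|}\sum_{\gamma \in \Gamma} \gamma,
\]
which is well-defined since $|\Gamma|$ is invertible in $F$. It is a $\Gamma$-equivariant idempotent on $V$ with image $V^\Gamma$, yielding a canonical $\Gamma$-stable decomposition $V = V^\Gamma \oplus \ker(e)$. Dualizing gives $V^* = (V^\Gamma)^* \oplus \ker(e)^*$, and one checks that $(V^*)^\Gamma$ coincides with the first summand: namely, the restriction map $r \colon (V^*)^\Gamma \to (V^\Gamma)^*$ is an isomorphism. For injectivity, if $f \in (V^*)^\Gamma$ vanishes on $V^\Gamma$, then for any $v \in V$ the identity $f(v) = \frac{1}{|\Gamma|}\sum_{\gamma} f(\gamma v) = f(e(v)) = 0$ forces $f = 0$. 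For surjectivity, given $g \in (V^\Gamma)^*$, the functional $g \circ e$ is $\Gamma$-fixed (since $e$ is $\Gamma$-equivariant with image in $V^\Gamma$) and restricts to $g$ on $V^\Gamma$.

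Combining the two steps yields the canonical identification. The main obstacle, such as it is, lies in checking that the resulting isomorphism is compatible with the $F$-rational structures and with the pairings used to define $\frak{g}^*$ and $\Lie(\bG^{[\Gamma]})^*$ in Section~\ref{S:notation}; but since $\Gamma$ acts by $F$-automorphisms, the Reynolds operator descends over $F$, and all constructions are $\Gamma$-equivariant, so these compatibilities are automatic. In practice I would simply record that the restriction map $r$ is this canonical identification, noting that under it the pairing between $\Lie(\bG^{[\Gamma]})$ and $\Lie(\bG^{[\Gamma]})^*$ is the restriction of the pairing between $\Lie(\bG)$ and $\Lie(\bG)^*$.
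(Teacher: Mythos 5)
Your proof is correct and follows essentially the same route as the paper's: the paper invokes Maschke's theorem to obtain the canonical $\Gamma$-invariant complement and defines the identification by extension-by-zero with inverse given by restriction, while you simply make the same decomposition explicit via the averaging idempotent $e$ (whose kernel is that complement) and verify bijectivity of restriction directly. The only difference is cosmetic; your justification of $\Lie(\bG^{[\Gamma]})=\Lie(\bG)^\Gamma$, which the paper asserts without comment, is a welcome addition.
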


\begin{proof}
We have $\Lie(\bG)^\Gamma = \Lie(\bG^{[\Gamma]})$.
Since $p$ does not divide $|\Gamma|$, Maschke's theorem applies, so that both $\Lie(\bG)$ and $\Lie(\bG)^*$ are completely decomposable as $\Gamma$-modules.  Thus there is a natural $F$-linear isomorphism $\Lie(\bG^{[\Gamma]})^*\rightarrow(\Lie(\bG)^*)^\Gamma$ defined by extending any functional $f$ on $\Lie(G^{[\Gamma]})$ by zero on the unique $\Gamma$-invariant complement in $\Lie(\bG)$; its inverse is given by restriction.
\end{proof}

We write $\frak{g}^\Gamma$ for $\Lie(\bG)^\Gamma(F)=\Lie(\bG^{[\Gamma]})(F)$, and similarly for the dual.

Given a nonzero element $X\in\frak{g}^*$ and a point $x\in\buil(G)$,  the \emph{depth of $X$ at $x$} is defined to be $d_G(x,X)=\mathrm{max}\{r\ |\  X\in\frak{g}_{x,r}^*\}$. The \emph{depth} of $X$ is $d_G(X)=\mathrm{sup}\{d_G(x,X)\ |\ x\in\buil(G)\}$, when the supremum exists.

\begin{lemma} \label{lem:d}
Suppose that $X\in\frak{g}^*\setminus \{0\}$ and $x\in\buil(G)$ are both $\Gamma$-fixed. Then, viewing $X$ as an element of $(\frak{g}^\Gamma)^*$ and $x$ as a point in $\buil(G^{[\Gamma]})$, we have $d_G(x,X)=d_{G^{[\Gamma]}}(x,X)$. 
\end{lemma}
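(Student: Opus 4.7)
The plan is to compare the two filtrations $\mathfrak{g}^*_{x,r}$ and $(\mathfrak{g}^\Gamma)^*_{x,r}$ directly, after using Lemma~\ref{lem:dual-embedding} to identify $X$ with its restriction to $\mathfrak{g}^\Gamma$. The key input I would invoke is that, under our hypotheses (in particular $p \nmid |\Gamma|$), the Moy--Prasad filtration on $\mathfrak{g}^{[\Gamma]}$ at the $\Gamma$-fixed point $x$ is obtained by taking $\Gamma$-fixed points of that on $\mathfrak{g}$; that is,
\[
(\mathfrak{g}^\Gamma)_{x,s} = (\mathfrak{g}_{x,s})^\Gamma \quad \text{for all } s \in \mathbb{R}.
\]
This compatibility is established in \cite{PrasadYu2002} and \cite{Portilla2014}, whose hypotheses we have already imposed.

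Granted this, one inclusion is essentially immediate. If $X \in \mathfrak{g}^*_{x,r}$, then $X(Z) \in \mathfrak{p}$ for all $Z \in \mathfrak{g}_{x,-r+}$; restricting attention to $Z \in (\mathfrak{g}^\Gamma)_{x,-r+} \subseteq \mathfrak{g}_{x,-r+}$ shows that $X|_{\mathfrak{g}^\Gamma}$ lies in $(\mathfrak{g}^\Gamma)^*_{x,r}$. Hence $d_G(x,X) \leq d_{G^{[\Gamma]}}(x,X)$.

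For the reverse inequality, suppose $X|_{\mathfrak{g}^\Gamma} \in (\mathfrak{g}^\Gamma)^*_{x,r}$ and let $Y \in \mathfrak{g}_{x,-r+}$. Using that $p \nmid |\Gamma|$, I would form the averaged element
\[
\overline{Y} = \frac{1}{|\Gamma|}\sum_{\gamma \in \Gamma} \gamma \cdot Y.
\]
Because $\mathfrak{g}_{x,-r+}$ is $\Gamma$-stable (as $x \in \buil(G)^\Gamma$), $\overline{Y}$ lies in $(\mathfrak{g}_{x,-r+})^\Gamma = (\mathfrak{g}^\Gamma)_{x,-r+}$. The $\Gamma$-invariance of $X$ (Lemma~\ref{lem:dual-embedding}) gives $X(\gamma \cdot Y) = X(Y)$ for every $\gamma$, whence $X(Y) = X(\overline{Y}) \in \mathfrak{p}$. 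Thus $X \in \mathfrak{g}^*_{x,r}$, and combining with the first inclusion yields $d_G(x,X) = d_{G^{[\Gamma]}}(x,X)$.

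The main obstacle is simply the correct citation for the identity $(\mathfrak{g}^\Gamma)_{x,s} = (\mathfrak{g}_{x,s})^\Gamma$; everything else is formal manipulation using the averaging operator, which is valid precisely because $p$ is coprime to $|\Gamma|$. No estimate is needed in the averaging step because $X$ is already assumed $\Gamma$-invariant, so the values $X(\gamma \cdot Y)$ are constant in $\gamma$ rather than merely averaged into the integer ring.
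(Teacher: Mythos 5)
Your proof is correct and follows essentially the same route as the paper: the key input in both is the identity $(\frak{g}^\Gamma)_{x,s}=\frak{g}^\Gamma\cap\frak{g}_{x,s}$ from \cite[Lem.~2.8]{Portilla2014}, after which the paper simply asserts that the dual filtration is likewise preserved. Your averaging argument (valid since $p\nmid|\Gamma|$ and $X$ is $\Gamma$-fixed, so $X(Y)=X(\overline{Y})$) is exactly the detail the paper leaves implicit.
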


\begin{proof}
Taking $\Gamma$-fixed points preserves Moy--Prasad filtrations on the Lie algebra, by \cite[Lem. 2.8]{Portilla2014}. That is, for all $r\in \mathbb{R}$ and $x\in\buil(G^{[\Gamma]})$, we have $(\frak{g}^\Gamma)_{x,r}=\frak{g}^\Gamma\cap\frak{g}_{x,r}$. Since $\frak{g}_{x,r}^*$ is defined as $\{X\in\frak{g}^*\ |\ \forall Y\in\frak{g}_{x,-r+},\ X(Y)\in\frak{p}\}$, taking $\Gamma$-fixed points preserves the filtration on the dual as well.
\end{proof}


We now define almost stable and almost strongly stable elements, following \cite[Def. 3.1]{Fintzen2021}.

We say an element $X\in\frak{g}^*$ is \emph{almost stable} if its $\bG$-orbit in $\Lie(\bG)^*$ is closed.  Note that if we identify $\frak{g}^*$ and $\frak{g}$ via a non-degenerate $G$-equivariant bilinear form, then almost stable elements correspond to semisimple elements of the Lie algebra.  In particular, by \cite[Lemma 3.3.4]{AdlerDeBacker2002}, such elements have finite depth.

Now suppose $X$ is almost stable and let $r=d(x,X)$; then the corresponding quotient element $\overline{X} \in \frak{g}^*_{x,r:r+}$ is nonzero.  We say that  $X$ is \emph{almost strongly stable} at $x$ if the orbit of $\overline{X}$ in $\frak{g}^*_{x,r:r+}$ under the special fibre of the $\frak{o}$-scheme $\bG_{x,0}$  does not contain $0$ in its Zariski closure.  This is the condition that the coset
$X+\frak{g}^*_{x,r+}$ is nondegenerate, in the sense of \cite{AdlerDeBacker2002}; for $p$ sufficiently large (and again identifying $\frak{g}^*$ and $\frak{g}$), it is equivalent to the statement that this coset does not contain any nilpotent elements.

\begin{lemma} \label{lem:as}
Suppose that $X\in\frak{g}^*$ is almost stable under the coadjoint action of $\bG$ and that $\Gamma\cdot X=X$. Then, viewed as an element of $(\frak{g}^\Gamma)^*$, $X$ is almost stable under the coadjoint action of $\bG^{[\Gamma]}$.
\end{lemma}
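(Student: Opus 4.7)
The plan is to reduce the statement to a purely Lie-algebraic assertion via an identification of $\frak{g}^*$ with $\frak{g}$.  Under the standing hypotheses on $p$ there is a $\bG$-equivariant nondegenerate symmetric bilinear form $B$ on $\Lie(\bG)$, and one can arrange that $B$ is $\Gamma$-invariant as well (checking this factor by factor: on an almost-simple factor the space of $\bG$-invariant forms is one-dimensional and $\mathrm{Aut}(\bG)$ acts trivially on it via the Killing form, while the center is handled separately).  A Maschke-style orthogonality argument like the one underlying Lemma~\ref{lem:dual-embedding} then shows that $B$ restricts to a nondegenerate form on $\Lie(\bG)^\Gamma = \Lie(\bG^{[\Gamma]})$, producing a $\bG^{[\Gamma]}$-equivariant identification $\Lie(\bG^{[\Gamma]}) \cong \Lie(\bG^{[\Gamma]})^*$ compatible with the identification of Lemma~\ref{lem:dual-embedding}.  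Under these isomorphisms the coadjoint action corresponds to the adjoint action and almost-stability translates to semisimplicity, so the lemma reduces to the following: \emph{if $Y \in \Lie(\bG)^\Gamma$ is semisimple as an element of $\Lie(\bG)$, then $Y$ is semisimple as an element of $\Lie(\bG^{[\Gamma]})$.}

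For this reduced statement, set $\bH = \mathrm{Cent}_\bG(Y)^\circ$.  Then $\bH$ is a connected reductive subgroup, $\Gamma$-stable since $\Gamma$ fixes $Y$, and $Y$ lies in the Lie algebra of the $\Gamma$-stable connected center $Z(\bH)^\circ$.  The connected $\Gamma$-fixed subgroup $(Z(\bH)^\circ)^{[\Gamma]}$ is therefore a subtorus of $\bG^{[\Gamma]}$, and by \cite[Lem.~2.8]{Portilla2014} (applicable since $p \nmid |\Gamma|$) its Lie algebra is $\Lie(Z(\bH)^\circ)^\Gamma$, which contains $Y$.  Hence $Y$ lies in the Lie algebra of a torus of $\bG^{[\Gamma]}$ and so is semisimple there, completing the proof.

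The main delicate point is the preliminary construction of a $\Gamma$-invariant nondegenerate $\bG$-equivariant bilinear form: averaging a given form over $\Gamma$ need not preserve nondegeneracy in general, and one must verify this carefully on each $\Gamma$-orbit of almost-simple factors.  An alternative route that sidesteps the bilinear form altogether is to invoke a Richardson-type theorem directly asserting that the closed $\bG$-orbit $\bG\cdot X$ in $\Lie(\bG)^*$, when intersected with $(\Lie(\bG)^*)^\Gamma$, decomposes into finitely many closed $\bG^{[\Gamma]}$-orbits; one of these contains $X$ and provides the desired conclusion.
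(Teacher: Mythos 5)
Your argument is correct in substance, but it takes a genuinely different and considerably heavier route than the paper, whose proof is two lines and never leaves $\frak{g}^*$: since $\bG^{[\Gamma]}$ is a closed \emph{reductive} subgroup of $\bG$ (by \cite{PrasadYu2002}), its orbit through a point whose $\bG$-orbit is closed is itself closed --- it is reductivity, not mere closedness, of the subgroup that makes this standard fact work --- and by Lemma~\ref{lem:dual-embedding} this orbit sits inside $(\Lie(\bG)^*)^\Gamma\simeq\Lie(\bG^{[\Gamma]})^*$, which is exactly almost stability for $\bG^{[\Gamma]}$. Your ``alternative route'' at the end is essentially this argument, except that you invoke a Richardson-type finiteness statement that is stronger than what is needed (closedness of the single orbit through $X$ suffices). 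Your main route instead transports everything to the Lie algebra and proves the concrete statement that a $\Gamma$-fixed semisimple $Y$ lies in $\Lie(Z(\bH)^\circ)^\Gamma$ for $\bH=\mathrm{Cent}_\bG(Y)^\circ$, hence in the Lie algebra of a torus of $\bG^{[\Gamma]}$; that part is correct and yields more structural information, in the spirit of what Lemmas~\ref{lem:tori} and~\ref{lem:ass} actually exploit later. What it costs is the preliminary construction of a $\Gamma$-invariant nondegenerate $\bG$-equivariant form, which you rightly flag as the weak link: on the derived part the trace/Killing form is automatically $\mathrm{Aut}$-invariant and the only issue is nondegeneracy for the given $p$, but on $\Lie(Z(\bG)^\circ)$ an invariant nondegenerate form is not produced by averaging; one must instead argue that the $\Gamma$-action there factors through $\mathrm{GL}(X_*(Z(\bG)^\circ))$ and is therefore self-dual, with extra care when $F$ has positive characteristic. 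Once the form is $\Gamma$-invariant, nondegeneracy of its restriction to $\frak{g}^\Gamma$ does follow from Maschke as you say. Since none of this apparatus is needed for the statement, the paper's direct orbit-theoretic argument is the more economical one, but your proof is a valid (and more explicit) alternative.
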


\begin{proof}
Since $X$ is $\Gamma$-fixed, it lies in $(\frak{g}^*)^\Gamma$. Since $\bG^{[\Gamma]}$ is a closed subgroup of $\bG$, the orbit of $X$ under $\bG^{[\Gamma]}$ is also closed, and is contained in the $\bG^{[\Gamma]}$-invariant subspace $(\Lie(\bG)^*)^\Gamma\simeq \Lie(\bG^{[\Gamma]})^*$.
\end{proof}

\begin{lemma}\label{lem:twisted-levis}
Suppose that $X\in\frak{g}^*$ is almost stable under the coadjoint action of $\bG$, and that $\Gamma\cdot X=X$. Let $\bG'=\mathrm{Cent}_\bG(X)$. Then $(\bG')^{[\Gamma]}=\mathrm{Cent}_{\bG^{[\Gamma]}}(X)$.
In particular, $(\bG')^{[\Gamma]}$ is a twisted Levi subgroup of $\bG^{[\Gamma]}$.
\end{lemma}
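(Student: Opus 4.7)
The strategy is to first verify that $(\bG')^{[\Gamma]}$ is a well-defined connected reductive subgroup of $\bG^{[\Gamma]}$, then to prove the two inclusions making up the equality, and finally to deduce the ``in particular'' assertion from Lemma~\ref{lem:as}.

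To begin, I would observe that since $\gamma\cdot X=X$ for every $\gamma\in\Gamma$, conjugation by $\gamma$ sends $\bG'=\mathrm{Cent}_\bG(X)$ to $\mathrm{Cent}_\bG(\gamma\cdot X)=\bG'$; hence $\bG'$ is $\Gamma$-stable. As the coadjoint centralizer of an almost stable element, $\bG'$ is itself a twisted Levi of $\bG$ (and splits over a tame extension, per Remark~\ref{rem:split}), so Prasad--Yu \cite{PrasadYu2002} applies to give that $(\bG')^{[\Gamma]}=((\bG')^\Gamma)^\circ$ is a connected reductive subgroup. The forward inclusion $(\bG')^{[\Gamma]}\subseteq \mathrm{Cent}_{\bG^{[\Gamma]}}(X)$ is immediate: each element lies in $\bG'$ (so centralizes $X$) and in $\bG^{[\Gamma]}$ by construction.

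For the reverse inclusion I would pass to Lie algebras. Since $|\Gamma|$ is coprime to $p$, the same Maschke-type reasoning used in Lemma~\ref{lem:dual-embedding} gives $\mathrm{Lie}((\bG')^{[\Gamma]})=\mathrm{Lie}(\bG')^\Gamma$, and likewise $\mathrm{Lie}(\bG^{[\Gamma]})=\mathrm{Lie}(\bG)^\Gamma$ (cf.\ \cite{Portilla2014}). Combined with the standard identification $\mathrm{Lie}(\bG')=\mathrm{Cent}_{\mathrm{Lie}(\bG)}(X)$ for the coadjoint action, this gives
\[
\mathrm{Lie}((\bG')^{[\Gamma]}) \;=\; \mathrm{Cent}_{\mathrm{Lie}(\bG)}(X)^{\Gamma} \;=\; \mathrm{Cent}_{\mathrm{Lie}(\bG)^{\Gamma}}(X) \;=\; \mathrm{Lie}\bigl(\mathrm{Cent}_{\bG^{[\Gamma]}}(X)\bigr).
\]
Together with the forward inclusion, this forces equality at the level of identity components. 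The remaining ``full'' equality then follows from the convention--implicit in the paper and in Yu's construction (cf.\ Definition~\ref{def:td}(i))--that centralizers producing twisted Levis are taken to be connected; this connectedness is automatic here, because by Lemma~\ref{lem:as} $X$ is almost stable in $\mathrm{Lie}(\bG^{[\Gamma]})^*$, and the centralizer in a connected reductive group of such an element is a (connected) twisted Levi under the standing tameness and good-$p$ hypotheses.

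The ``In particular'' clause is then read off of the same Lemma~\ref{lem:as}: having identified $(\bG')^{[\Gamma]}$ with $\mathrm{Cent}_{\bG^{[\Gamma]}}(X)$, and knowing that $X$ viewed inside $(\mathrm{Lie}(\bG^{[\Gamma]}))^*$ is almost stable under the coadjoint action of the connected reductive group $\bG^{[\Gamma]}$, the subgroup $(\bG')^{[\Gamma]}$ is automatically a twisted Levi subgroup of $\bG^{[\Gamma]}$. The main subtlety I anticipate is the careful bookkeeping around connected components, in particular confirming that $\mathrm{Cent}_{\bG^{[\Gamma]}}(X)$ is connected so that the Lie algebra comparison yields an actual equality of algebraic groups; this is where the hypotheses on $p$ (including coprimality with the Weyl group orders of both $\bG$ and $\bG^{[\Gamma]}$) will be doing the essential work.
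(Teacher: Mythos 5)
Your argument is correct and rests on the same pillars as the paper's proof: Lemma~\ref{lem:as} together with the hypotheses on $p$ (via \cite[Lem.~3.4]{Fintzen2021}) to guarantee that $\mathrm{Cent}_{\bG^{[\Gamma]}}(X)$ is a connected twisted Levi subgroup of $\bG^{[\Gamma]}$, which then forces it to coincide with $(\bG')^{[\Gamma]}$. The only real difference is in the reverse inclusion: the paper reads off the group-level identity $\bG'\cap\bG^{[\Gamma]}=\mathrm{Cent}_{\bG^{[\Gamma]}}(X)$ directly from the identification $(\frak{g}^\Gamma)^*=(\frak{g}^*)^\Gamma$ of Lemma~\ref{lem:dual-embedding} (since $X$ vanishes on the $\Gamma$-isotypic complement, any element of $\bG^{[\Gamma]}$ fixing $X|_{\frak{g}^\Gamma}$ fixes $X$ on all of $\frak{g}$), whereas your Lie-algebra comparison reaches the same conclusion at the cost of an extra input (smoothness of the centralizers, so that equality of Lie algebras yields equality of identity components), which is harmless under the standing hypotheses but avoidable.
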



\begin{proof}
From the identification $(\frak{g}^\Gamma)^*=(\frak{g}^*)^\Gamma$, one sees that $\bG'\cap\bG^{[\Gamma]}=\mathrm{Cent}_{\bG^{[\Gamma]}}(X)$.  Since by Lemma~\ref{lem:as}, $X$ is almost stable relative to $\bG^{[\Gamma]}$, we have by \cite[Lem. 3.4]{Fintzen2021} that this intersection is a twisted Levi subgroup of $\bG^{[\Gamma]}$.  Moreover, as recapped in \emph{loc.~cit.}, the hypotheses on $p$ ensure it is therefore connected and thus equal to $(\bG')^{[\Gamma]}$.  
\end{proof}

We have the inclusion $Z(\bG')^{[\Gamma]}\subseteq Z({\bG'}^{[\Gamma]})^\circ$, but note that equality should fail in general (\emph{cf.} Lemma~\ref{lem:roots}).

We now recall the notion of \emph{genericity}, as defined in \cite[Def. 3.5]{Fintzen2021}. Let $X\in\frak{g}^*$ be an almost stable element and let $\bG'=\mathrm{Cent}_{\bG}(X)$. Let $\bT$ be a maximal $F$-torus contained in $\bG'$ that splits over a tamely ramified extension $E$ of $F$. Let $\apart(\bG,\bT,E)$, respectively $\apart(\bG',\bT,E)$, denote the apartment of $\buil(\bG,E)$, respectively $\buil(\bG',E)$, corresponding to $\bT$.  Let $\Phi=\Phi(\bG,\bT)$ and $\Phi'=\Phi(\bG',\bT)$ be the corresponding root systems. For each $\alpha\in\Phi$, let $H_\alpha=d\alpha(1)\in\mathrm{Lie}(\bT)$ denote the corresponding coroot. By construction, we have $X(H_\alpha)=0$ for each $\alpha\in\Phi'$. 

\begin{definition}
Let $X\in \frak{g}^*$ be an almost stable element.  Choose a maximal $F$-torus $\bT$ in $\bG'=\mathrm{Cent}_{\bG}(X)$ and  $x\in\apart(\bG',\bT,E)\cap\buil(\bG',F)\subset\buil(\bG,F)$. We say $X$ is \emph{$\bG$-generic of depth $r$ relative to $x$} if $d(x,X)=r$ 
and furthermore $\mathrm{val}(X(H_\alpha))=r$ for each $\alpha\in\Phi(\bG,\bT)\setminus\Phi(\bG',\bT)$.  
\end{definition}

Note that if $\bG=\bG'$ then the condition in the definition is only that $d(x,X)=r$.  

\begin{lemma}\label{lem:sum}
Suppose two almost stable elements $X$ and $Y$ are $\bG$-generic of depth 
$r$ and $s$, respectively, relative to a point $x$, and $r<s$.  If $\mathrm{Cent}_{\bG}(X)=\mathrm{Cent}_{\bG}(Y)$ then $X+Y$ is almost stable and $\bG$-generic of depth $r$ relative to $x$.
\end{lemma}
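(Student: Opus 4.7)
The plan is to verify, for $Z := X+Y$, the three defining properties of a $\bG$-generic element of depth $r$ at $x$ in turn: almost stability, that $d(x,Z) = r$, and the valuation condition $\mathrm{val}(Z(H_\alpha)) = r$ for the relevant roots. Write $\bG' = \mathrm{Cent}_\bG(X) = \mathrm{Cent}_\bG(Y)$, $\Phi = \Phi(\bG,\bT)$ and $\Phi' = \Phi(\bG',\bT)$, where $\bT$ is a maximal $F$-torus of $\bG'$ common to the genericity hypotheses on $X$ and $Y$.

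The depth and root conditions will follow directly from the strict ultrametric property. Since $X \in \frak{g}^*_{x,r} \setminus \frak{g}^*_{x,r+}$ while $Y \in \frak{g}^*_{x,s} \subset \frak{g}^*_{x,r+}$ (because $s>r$), their sum satisfies $Z \in \frak{g}^*_{x,r} \setminus \frak{g}^*_{x,r+}$, so $d(x,Z) = r$. For the root condition, I note that $\bG' \subseteq \mathrm{Cent}_\bG(Z)$ forces $\Phi' \subseteq \Phi(\mathrm{Cent}_\bG(Z),\bT)$, so verifying the generic condition for $Z$ reduces to checking $\mathrm{val}(Z(H_\alpha)) = r$ only for $\alpha \in \Phi \setminus \Phi'$; for such $\alpha$, the given genericity of $X$ and $Y$ provides $\mathrm{val}(X(H_\alpha)) = r < s = \mathrm{val}(Y(H_\alpha))$, so $\mathrm{val}(Z(H_\alpha)) = r$.

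The substantive step is almost stability of $Z$. I plan to use the identification $\frak{g}^* \cong \frak{g}$ via a non-degenerate $\bG$-equivariant bilinear form available under our hypotheses on $p$, so that $X$ and $Y$ correspond to semisimple elements $\tilde X, \tilde Y \in \frak{g}$, both with group centralizer $\bG'$. Under the standing hypotheses on $p$, a semisimple Lie algebra element whose group centralizer is all of $\bG'$ must be central in $\Lie(\bG')$, hence (by reductivity of $\bG'$) must lie in $\Lie(\bZ')$ for $\bZ' = Z(\bG')^\circ$. Both $\tilde X$ and $\tilde Y$ therefore lie in the abelian Lie algebra $\Lie(\bZ')$, all of whose elements are semisimple, so $\tilde X + \tilde Y$ is semisimple and $Z$ is almost stable.

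The hard part is exactly this almost-stability assertion, because sums of semisimple Lie algebra elements are not in general semisimple; the argument relies crucially on the shared-centralizer hypothesis to confine $\tilde X$ and $\tilde Y$ to a common central torus Lie algebra, where commutativity and semisimplicity of the sum are automatic. As a by-product, the strict-ultrametric computation in fact forces $\mathrm{Cent}_\bG(Z) = \bG'$: any $\alpha \in \Phi(\mathrm{Cent}_\bG(Z),\bT) \setminus \Phi'$ would give $Z(H_\alpha) = 0$, on differentiating the coadjoint invariance of $Z$ under $\mathrm{Cent}_\bG(Z)$ along $\Lie(\mathrm{Cent}_\bG(Z))$, and this would contradict $\mathrm{val}(Z(H_\alpha)) = r$.
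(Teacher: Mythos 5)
Your proposal is correct and follows essentially the same route as the paper's proof: the ultrametric inequality gives $d(x,X+Y)=r$ and $\mathrm{val}((X+Y)(H_\alpha))=\mathrm{val}(X(H_\alpha))=r$ for $\alpha\in\Phi\setminus\Phi'$, while almost stability comes from identifying $X$ and $Y$ with semisimple elements of the centre of $\Lie(\bG')$ so that their sum is again semisimple. Your write-up merely fills in slightly more detail (why the shared centralizer forces centrality in $\Lie(\bG')$, and why $\mathrm{Cent}_\bG(X+Y)=\bG'$) than the paper, which delegates the connectedness of $\mathrm{Cent}_\bG(X+Y)$ to Fintzen's Lemma 3.4.
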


\begin{proof}
Let
 $\bG' = \mathrm{Cent}_{\bG}(X)\subset \mathrm{Cent}_{\bG}(X+Y)$. Under the $\bG$-equivariant identification of $\Lie(\bG)$ with $\Lie(\bG)^*$, the almost stable elements $X$ and $Y$ correspond to  semisimple elements of the centre of $\Lie(\bG')$, so their sum is again almost stable.  Thus by \cite[Lemma 3.4]{Fintzen2021} the centralizer of $X+Y$ is again connected.  Since $d(x,X+Y)=\min\{d(x,X),d(x,Y)\}=r$, we have $\mathrm{val}((X+Y)(H_\alpha))=\mathrm{val}(X(H_\alpha))=r$ for all $\alpha\in\Phi(\bG,\bT)\setminus\Phi(\bG',\bT)$.  Thus in particular $\mathrm{Cent}_{\bG}(X+Y)\subset \bG'$ and genericity follows.
\end{proof}

\begin{lemma}\label{lem:ass}
Suppose that $X\in\frak{g}^*$ is almost strongly stable and $\bG$-generic of depth $r$ at $x\in\buil(G)$, and that both $x$ and $X$ are $\Gamma$-fixed. Then $X$ is almost strongly stable and $\bG^{[\Gamma]}$-generic of depth $r$ at $x\in\buil(G^{[\Gamma]})$.
\end{lemma}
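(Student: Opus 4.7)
The plan is to address the three components of the conclusion---depth, almost strong stability, and $\bG^{[\Gamma]}$-genericity---separately. The depth claim $d_{G^{[\Gamma]}}(x,X)=r$ is immediate from Lemma~\ref{lem:d}. For almost strong stability, let $\overline{X}$ denote the image of $X$ in $(\frak{g}^\Gamma)^*_{x,r:r+}$. Using that Moy--Prasad filtrations commute with the $\Gamma$-fixed-point functor (as invoked in the proof of Lemma~\ref{lem:d}), the space $(\frak{g}^\Gamma)^*_{x,r:r+}$ sits as a $\Gamma$-fixed linear subspace of $\frak{g}^*_{x,r:r+}$, and the special fibre of $\bG^{[\Gamma]}_{x,0}$ is a closed subgroup scheme of the special fibre of $\bG_{x,0}$. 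Hence the orbit of $\overline{X}$ under the smaller group scheme is contained in the orbit under the larger one, and its Zariski closure in $(\frak{g}^\Gamma)^*_{x,r:r+}$ is the intersection with the $\Gamma$-fixed subspace of the Zariski closure of the larger orbit; since the latter avoids $0$ by hypothesis, so does the former.

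For genericity, use Lemma~\ref{lem:twisted-levis} to identify $(\bG^{[\Gamma]})'=(\bG')^{[\Gamma]}$. Choose a maximal $F$-torus $\bS$ of $(\bG')^{[\Gamma]}$, splitting over a tame extension $E$, with $x\in\apart((\bG')^{[\Gamma]},\bS,E)$. Extend $\bS$ to a $\Gamma$-stable maximal $F$-torus $\bT$ of $\bG'$ containing $\bS$ with $x\in\apart(\bG',\bT,E)$; such a $\bT$ exists by picking a $\Gamma$-stable maximal $F$-torus of $\mathrm{Cent}_{\bG'}(\bS)^\circ$, using the argument of Lemma~\ref{lem:tori}, together with standard compatibility of the fixed building embedding on apartments. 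By maximality of $\bS$, one has $\bS=\bT^{[\Gamma]}$. Since the $\bG$-genericity condition is independent of the chosen maximal torus of $\bG'$ whose apartment contains $x$, we obtain $\mathrm{val}(X(H_\beta))=r$ for every $\beta\in\Phi(\bG,\bT)\setminus\Phi(\bG',\bT)$.

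Now fix $\alpha\in\Phi(\bG^{[\Gamma]},\bS)\setminus\Phi((\bG')^{[\Gamma]},\bS)$. By Lemma~\ref{lem:roots}, $\alpha=\beta|_{\bS}$ for some $\beta\in\Phi(\bG,\bT)$ with $\Lie(\bG)_\beta^{\Gamma_\beta}\neq 0$; since $\Phi(\bG',\bT)$ is $\Gamma$-stable, the assumption $\alpha\notin\Phi((\bG')^{[\Gamma]},\bS)$ forces the entire $\Gamma$-orbit of $\beta$ to lie in $\Phi(\bG,\bT)\setminus\Phi(\bG',\bT)$, so the $\bG$-genericity estimate applies to each member of this orbit. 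The coroot $H_\alpha\in\Lie(\bS)$ can be expressed as an integer linear combination of the coroots $H_{\beta'}$ for $\beta'$ in this $\Gamma$-orbit, as dictated by the folding of root systems; the $\Gamma$-invariance of $X$ together with $\gamma\cdot H_{\beta'}=H_{\gamma\cdot\beta'}$ forces $X(H_{\beta'})$ to be constant on the orbit, so $X(H_\alpha)=cX(H_\beta)$ for an integer $c$ bounded by $|\Gamma|$ and hence a unit modulo $p$. Therefore $\mathrm{val}(X(H_\alpha))=\mathrm{val}(X(H_\beta))=r$, completing the genericity verification. The main technical obstacle is precisely this coroot identity: writing $H_\alpha$ explicitly as an integer combination of the $H_{\beta'}$'s requires a case-by-case check of the possible foldings (including the non-reduced case such as $A_{2n}\to BC_n$), and one must confirm that the resulting scalar $c$ remains nonzero modulo $p$, for which the hypothesis $p\nmid|\Gamma|$ is essential.
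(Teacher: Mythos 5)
Your overall plan (depth via Lemma~\ref{lem:d}, genericity via comparing coroots of $\bG^{[\Gamma]}$ with coroots of $\bG$ along the folding, using $p\nmid|\Gamma|$) matches the paper's, but the decisive valuation step has a genuine gap. You assert that $X(H_\alpha)=cX(H_\beta)$ for an integer $c$ ``bounded by $|\Gamma|$ and hence a unit modulo $p$''. Both halves of this fail: for the folding of $A_2$ by an involution swapping two simple roots $\beta,\gamma\beta$ with $\langle\beta,(\gamma\beta)^\vee\rangle=-1$, the fixed coroot is $H_\alpha=2(H_\beta+H_{\gamma\beta})$, so $c=4>|\Gamma|=2$; and in any case an integer at most $|\Gamma|$ need not be prime to $p$. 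You concede that the verification ultimately requires a case-by-case check of the foldings, which you do not carry out---but that check is precisely the crux of the lemma. The paper avoids it with a two-sided squeeze that is missing from your argument: writing $\tilde H=\sum_{\gamma\in\Gamma/\Gamma_\beta}H_{\gamma\beta}$, one has $H_\alpha=2(\beta(\tilde H))^{-1}\tilde H$, hence
\[X(H_\alpha)=\frac{2\,|\Gamma/\Gamma_\beta|}{\beta(\tilde H)}\,X(H_\beta).\]
The numerator $2|\Gamma/\Gamma_\beta|$ is prime to $p$ and the denominator $\beta(\tilde H)$ is a nonzero integer, so the scalar has valuation $\leq 0$ and $\val(X(H_\alpha))\leq\val(X(H_\beta))=r$; the reverse inequality $\val(X(H_\alpha))\geq r$ comes for free because $H_\alpha$ lies in the depth-zero part of $\frak{g}^\Gamma$ at the relevant point and $X$ has depth $r$ there. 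Equality follows with no case analysis and no need to know $c$ exactly.

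Two secondary issues. First, the paper does not argue at $x$ itself: it chooses a maximal $\Gamma$-fixed torus $\bS$ of $\bG'$, an auxiliary point $y\in\apart({\bG'}^{[\Gamma]},\bS,E)$, and transfers genericity and almost strong stability between $x$ and $y$ via \cite[Lem. 4.4]{Fintzen2021}; this replaces your unproved assertions that $x$ lies in the $E$-apartment of a suitable $\Gamma$-stable torus and that genericity is independent of the torus chosen. Second, your direct orbit-closure argument for almost strong stability rests on the unjustified claim that the special fibre of $\bG^{[\Gamma]}_{x,0}$ is a closed subgroup scheme of that of $\bG_{x,0}$ acting compatibly on $(\frak{g}^\Gamma)^*_{x,r:r+}\subset\frak{g}^*_{x,r:r+}$ (and your asserted equality of the small orbit's closure with an intersection is false in general, though only the containment is needed); the paper instead deduces almost strong stability from almost stability (Lemma~\ref{lem:as}) together with the already-established genericity, via \cite[Lem. 3.8]{Fintzen2021}. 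These last points are repairable; the valuation step is the real gap.
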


\begin{proof}
Set $\bG'=\mathrm{Cent}_{\bG}(X)$; this is $\Gamma$-stable.  Since $X$ is almost stable and generic of depth $r$ at $x$, by \cite[Lemma 3.11]{Fintzen2021} we have $x \in \buil(G')$.  In fact $x\in \buil(G')^\Gamma=\buil({G'}^{[\Gamma]})$ and $X$, viewed as an element of $(\frak{g}^\Gamma)^*$, also has depth $r$ relative to $x$, by Lemma~\ref{lem:d}.  To carry out the proof we now make a choice of torus and move to a more suitable point $y$, as follows.

Let $\bS$ be a maximal $\Gamma$-fixed torus in $\bG'$ and $\bT$ its centralizer, which by Lemma~\ref{lem:tori} is a $\Gamma$-stable maximal torus.  Let $E$ be a tamely ramified extension of $F$ over which $\bT$ splits, and choose a point 
$y\in\apart({\bG'}^{[\Gamma]},\bS, E)\cap\buil({G'}^{[\Gamma]})\subset\apart(\bG',\bT,E)\cap \buil(G')$. 
By \cite[Lemma 4.4]{Fintzen2021}, $X$ is also almost strongly stable and generic of depth $r$ at $y$, so in particular $X \in (\frak{g}^*_{y,r})^\Gamma$.

Since $X$ is $\bG$-generic of depth $r$ relative to $y$, we have that $X(H_\alpha)=0$ for all $\alpha \in \Phi(\bG',\bT)$ and that $\val(X(H_\alpha)) = r$ for all $\alpha \in \Phi(\bG,\bT)\setminus \Phi(\bG',\bT)$.  We next show these conditions hold for the corresponding roots in $\bG^{[\Gamma]}$.
Let $a\in \Phi(\bG^{[\Gamma]},\bS)$ 
and choose  $\alpha \in \Phi(\bG,\bT)$ as in Lemma~\ref{lem:roots} such that $\alpha|_{\bS}=a$.  
Then the coroot $H_a \in \Lie(\bS)$ is a $\Gamma$-fixed vector in the span of $\{H_{\gamma\cdot\alpha} \mid \gamma \in \Gamma/\Gamma_\alpha\}$.  It follows that it must be a (nonzero) multiple of $\tilde{H}=\sum_{\gamma \in \Gamma/\Gamma_\alpha} H_{\gamma\cdot\alpha}$.  In particular, $ \alpha(\tilde{H}) \in \Z$ is nonzero and so  
 $H_a = 2 (\alpha(\tilde{H}))^{-1}\tilde{H}$.
Thus, since $X$ is $\Gamma$-fixed, we may evaluate
$$
X(H_a) = 2 (\alpha(\tilde{H}))^{-1}\sum_{\gamma \in \Gamma/\Gamma_\alpha}X(H_{\gamma\cdot\alpha}) = \frac{2\vert \Gamma/\Gamma_\alpha \vert}{\alpha(\tilde{H})} X(H_\alpha).
$$
By  Lemma~\ref{lem:d}, $X$ has depth $r$ at $y$.  If $a \in \Phi({\bG'}^{[\Gamma]},\bS)$, we have $X(H_a)=0$.   If instead
$a \in \Phi({\bG}^{[\Gamma]},\bS)\setminus \Phi({\bG'}^{[\Gamma]},\bS)$ then since $H_a \in \frak{g}^{\Gamma}_{y,0}$ we have the inequality $\val(X(H_a))\geq r$.   Since $\val(X(H_\alpha))=r$ and $p$ does not divide $2\vert \Gamma \vert$ by hypothesis, we conclude from the preceding that $\val(X(H_a))=r$.
  Thus $X$ is $\bG^{[\Gamma]}$-generic of depth $r$ relative to $y$.

Since by Lemma~\ref{lem:as} $X$ is almost stable relative to $\bG^{[\Gamma]}$, applying \cite[Lemma 3.8]{Fintzen2021} yields that $X$ is almost strongly stable at $y$.  Finally, since both $x$ and $y$ lie in $\buil({G'}^{[\Gamma]})$, we conclude that $X$ is almost strongly stable and $\bG^{[\Gamma]}$-generic of depth $r$ relative to $x$.
\end{proof}

With this preparation, we are now able to give a straightforward formulation of the class of truncated data of $G$ that we are interested in.

\begin{definition}
Let $\Sigma=(\vec{G},x,\vec{r},\vec{X},\vec{\phi})$ be a truncated datum for $G$. We say that $\Sigma$ is \emph{$\Gamma$-stable} if $\Gamma$ fixes both the point $x$ and each of the $X^i\in \frak{g}^*$.
%
\end{definition}


\begin{proposition}\label{prop:gamma-data}
Let $\Sigma=(\vec{G},x,\vec{r},\vec{X},\vec{\phi})$ be a $\Gamma$-stable truncated datum for $G$. Define a tuple $\Sigma^\Gamma_{\mathrm{ext}}=(\vec{G}^{[\Gamma]},x,\vec{r},\vec{X},\vec{\phi})$, where:
\begin{enumerate}[(i)]
\item $\vec{G}^{[\Gamma]}$ is the twisted Levi sequence in $\bG^{[\Gamma]}$ consisting of the groups $\bG^{i,[\Gamma]}=(\bG^i)^{[\Gamma]}$;
\item each element $X^{i}\in((\frak{g}^i)^*)^\Gamma$ is viewed as a element of $(\frak{g}^{i,[\Gamma]})^*$ by restriction; and
\item each character $\phi^i$ is viewed as a character of $G^{i,[\Gamma]}$ by restriction.
\end{enumerate}
For each $i$ such that there exists $k>0$ for which $\bG^{i,[\Gamma]}=\bG^{i+k,[\Gamma]}$, replace the tuple of subsequences $((\bG^{i,[\Gamma]},\ldots, \bG^{i+k,[\Gamma]}), (r_i, \ldots, r_{i+k}), (X^i, \ldots, X^{i+k}), (\phi^i, \ldots, \phi^{i+k}))$ with the tuple $(\bG^{i,[\Gamma]}, r_{i+k}, \sum_{j=i}^{i+k} X^j, \prod_{j=i}^{i+k}\phi^j)$.
Then the resulting tuple $\Sigma^\Gamma$ is a truncated datum for $G^{[\Gamma]}$.
\end{proposition}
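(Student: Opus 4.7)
The plan is to verify each of the five axioms (i)–(v) of Definition~\ref{def:td} for the tuple $\Sigma^\Gamma$, obtained from $\Sigma^\Gamma_{\mathrm{ext}}$ by the described collapse.  Axiom (i) is comparatively direct: applying Lemma~\ref{lem:twisted-levis} with $X=X^i$ inside the ambient group $\bG^{i+1}$ shows that $\bG^{i,[\Gamma]}=\mathrm{Cent}_{\bG^{i+1,[\Gamma]}}(X^i)$ is a twisted Levi subgroup of $\bG^{i+1,[\Gamma]}$, hence of $\bG^{[\Gamma]}$; Remark~\ref{rem:split} then supplies the tameness of its splitting field.  The collapse is precisely what is needed to promote the chain to a strictly increasing one up to $\bG^{d+1,[\Gamma]}=\bG^{[\Gamma]}$.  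For (ii), the point $x$ lies in $\buil(G)^\Gamma=\buil(G^{[\Gamma]})$ by the choice of embedding fixed in Section~\ref{S:notation}, and the same identification places $x$ in each $\buil(G^{i,[\Gamma]})$.  For (iii), the subsequence of $r_j$'s obtained by keeping the largest index in each collapsed block is still strictly increasing.

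The main work lies in (iv), which has two flavours.  For an index $j$ that survives as a singleton, Lemma~\ref{lem:ass} directly shows that $X^j$, regarded as an element of $(\frak{g}^{j,[\Gamma]})^*$ via Lemma~\ref{lem:dual-embedding}, is almost strongly stable and $\bG^{j+1,[\Gamma]}$-generic of depth $-r_j$ at $x$, while Lemma~\ref{lem:twisted-levis} yields the required centralizer identity $\mathrm{Cent}_{G^{j+1,[\Gamma]}}(X^j)=G^{j,[\Gamma]}$.  For a collapsed block on indices $i,i+1,\ldots,i+k$ with $\bG^{i,[\Gamma]}=\cdots=\bG^{i+k,[\Gamma]}$, the surviving entry carries the element $Y=\sum_{j=i}^{i+k}X^j$ and depth parameter $r_{i+k}$.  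I would deduce its properties from those of $X^{i+k}$ by iterated application of Lemma~\ref{lem:sum} inside $\bG^{i+k+1,[\Gamma]}$, using that $X^{i+k}$ has centralizer $\bG^{i+k,[\Gamma]}=\bG^{i,[\Gamma]}$ there and is $\bG^{i+k+1,[\Gamma]}$-generic of depth $-r_{i+k}$ by Lemma~\ref{lem:ass}, while each remaining $X^j$ with $j<i+k$ is an almost stable element of $(\frak{g}^{i,[\Gamma]})^*$ of strictly shallower depth $-r_j>-r_{i+k}$.

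The main obstacle will be checking the hypothesis of Lemma~\ref{lem:sum} at each iteration---that successive partial sums share the same centralizer in $\bG^{i+k+1,[\Gamma]}$ as $X^{i+k}$.  The block condition gives that $\mathrm{Cent}_{\bG^{j+1,[\Gamma]}}(X^j)=\bG^{i,[\Gamma]}$ coincides with the ambient group at the bottom of the block, so the genericity of each $X^j$ there is vacuous; the substance is that adjoining summands of strictly greater Moy--Prasad depth cannot enlarge $\mathrm{Cent}_{\bG^{i+k+1,[\Gamma]}}(X^{i+k})=\bG^{i,[\Gamma]}$.  This reduces to a valuation-on-coroots computation in the spirit of the proof of Lemma~\ref{lem:ass}: choosing a maximal $F$-torus $\bS\subset\bG^{i,[\Gamma]}$ and its $\Gamma$-stable centralizer $\bT$ in $\bG$ (Lemma~\ref{lem:tori}), one compares $\val(Y(H_a))=\sum_j\val(X^j(H_a))$ for $a\in\Phi(\bG^{i+k+1,[\Gamma]},\bS)\setminus\Phi(\bG^{i,[\Gamma]},\bS)$, and observes that the $-r_{i+k}$-contribution from $X^{i+k}$ dominates via Lemma~\ref{lem:roots}; the vanishing $Y(H_a)=0$ for $a\in\Phi(\bG^{i,[\Gamma]},\bS)$ follows termwise.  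Almost strong stability of $Y$ then follows from that of $X^{i+k}$ by the same argument used at the end of the proof of Lemma~\ref{lem:ass}, invoking \cite[Lem.~3.8]{Fintzen2021}.

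Axiom (v) is then routine: each $\phi^j$ is by hypothesis realized by $X^j$ on $G^j_{x,s_j+:r_j+}$, and taking $\Gamma$-fixed points commutes with the Moy--Prasad isomorphism by \cite[Lem.~2.8]{Portilla2014}, so the restriction of $\phi^j$ to $G^{j,[\Gamma]}_{x,s_j+}$ is realized by the corresponding restriction of $X^j$.  Multiplying across a block $(i,\dots,i+k)$ and using that the summands of $Y$ of depth strictly greater than $-r_{i+k}$ are trivial modulo $\frak{g}^{i,[\Gamma]}_{x,r_{i+k}+}$, one concludes that $\prod_{j=i}^{i+k}\phi^j$ is realized by $Y$ on $G^{i,[\Gamma]}_{x,s_{i+k}+:r_{i+k}+}$, completing the verification.
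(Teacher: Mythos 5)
Your proposal is correct and takes essentially the same route as the paper's (much terser) proof: Lemma~\ref{lem:twisted-levis} together with Remark~\ref{rem:split} for condition (i), Lemma~\ref{lem:ass} to get (iv) for $\Sigma^\Gamma_{\mathrm{ext}}$, and Lemma~\ref{lem:sum} (with the depths $-r_j$) to handle the collapsed blocks. Your additional care in verifying the centralizer/depth hypotheses of Lemma~\ref{lem:sum} within a collapsed block elaborates a step the paper leaves implicit, but it is the same argument.
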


\begin{proof}
We verify the conditions of Definition~\ref{def:td}.  By Lemma~\ref{lem:twisted-levis} and Remark~\ref{rem:split}, (i) holds for $\Sigma^\Gamma_{\mathrm{ext}}$ and thus for $\Sigma^\Gamma$, with the exception that the inclusions may not be strict for the former tuple.  Conditions (ii) and (iii) are given and (v) is immediate. By Lemma~\ref{lem:ass}, (iv) holds for $\Sigma^\Gamma_{\mathrm{ext}}$ and by Lemma~\ref{lem:sum}, recalling that each $X^i$ is of depth $-r_i$, it thus holds for $\Sigma^\Gamma$.    
\end{proof}

The difference between $\Sigma^\Gamma_{\mathrm{ext}}$ and $\Sigma^\Gamma$ is effectively only cosmetic.  For the purposes of defining the pairs $(H(\Sigma^\Gamma),\theta_{\Sigma^\Gamma})$ and $(J(\Sigma^\Gamma),\kappa_{\Sigma^\Gamma})$, we may use $\Sigma^\Gamma_{\mathrm{ext}}$ and the truncated datum $\Sigma^\Gamma$ interchangeably, as we make precise in the following sections.  In all cases we retain and refer only to the indices $i$ of the original datum.

%
%

\section{Semisimple characters associated to $\Gamma$-stable truncated data}\label{S:simplechars}

We now show that the restriction of truncated data corresponds to a restriction of semisimple characters.  We begin with some results about the compatibility of Moy--Prasad filtration subgroups.

\begin{lemma}\label{lem:gxrs}
Let $x\in\buil(G^{[\Gamma]})=\buil(G)^\Gamma$. For all $r>0$, one has
\[G_{x,r}\cap G^{[\Gamma]}=(G^{[\Gamma]})_{x,r}.
\]
\end{lemma}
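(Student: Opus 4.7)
I will establish the two inclusions separately.  The containment $(G^{[\Gamma]})_{x,r}\subseteq G_{x,r}\cap G^{[\Gamma]}$ should follow from the functorial compatibility of the Moy--Prasad filtrations with the $G^{[\Gamma]}$-equivariant embedding of buildings $\buil(G^{[\Gamma]})\hookrightarrow\buil(G)$ fixed in Section~\ref{S:notation}; this compatibility is an implicit feature of the Prasad--Yu construction, and its Lie-algebra analogue is exactly what \cite[Lem.~2.8]{Portilla2014} supplies.

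For the reverse containment my strategy is to transport Portilla's Lie-algebra identity $(\frak{g}^\Gamma)_{x,r}=\frak{g}^\Gamma\cap\frak{g}_{x,r}$ to the group level via the $\Gamma$-equivariant Moy--Prasad isomorphisms $G_{x,s:r'}\simeq \frak{g}_{x,s:r'}$ (valid for $0<s<r'\leq 2s$) by a successive approximation argument.  Given $g\in G_{x,r}\cap G^{[\Gamma]}$, its image in $G_{x,r:2r}\simeq \frak{g}_{x,r:2r}$ is $\Gamma$-fixed; since the target is abelian and $p\nmid |\Gamma|$, the Maschke averaging operator $\frac{1}{|\Gamma|}\sum_{\gamma\in\Gamma}\gamma$ lets us identify $(\frak{g}_{x,r:2r})^\Gamma$ with the quotient $(\frak{g}^\Gamma)_{x,r}/(\frak{g}^\Gamma)_{x,2r}$ by Portilla.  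Transferring back through Moy--Prasad then yields $h_0\in (G^{[\Gamma]})_{x,r}$ with $h_0^{-1}g\in G_{x,2r}\cap G^{[\Gamma]}$.  Iterating the construction at depths $2r,4r,8r,\ldots$ produces elements $h_n\in (G^{[\Gamma]})_{x,2^n r}$ whose partial products $h_0h_1\cdots h_n$ all lie in $(G^{[\Gamma]})_{x,r}$ and form a Cauchy sequence in the pro-$p$ topology; since $(G^{[\Gamma]})_{x,r}$ is closed, this sequence converges to an element agreeing with $g$, yielding $g\in (G^{[\Gamma]})_{x,r}$.

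The main technical obstacle I anticipate is that taking $\Gamma$-fixed points does not generally commute with quotients of non-abelian pro-$p$ groups; the plan circumvents this by descending at each stage to the abelian Lie-algebra quotient—where Maschke averaging applies directly—before lifting back to $G$ via a single Moy--Prasad step, so the obstruction never accumulates.  All of the structural input ultimately comes from \cite[Lem.~2.8]{Portilla2014}; the work lies in the layered bookkeeping of the successive approximation.
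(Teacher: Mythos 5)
Your successive-approximation strategy has a genuine gap at its central step. When you write ``Transferring back through Moy--Prasad then yields $h_0\in (G^{[\Gamma]})_{x,r}$ with $h_0^{-1}g\in G_{x,2r}\cap G^{[\Gamma]}$,'' you are silently using two different Moy--Prasad isomorphisms: the one for $G$ (to read off the class of $g$ in $G_{x,r:2r}\simeq\frak{g}_{x,r:2r}$) and the intrinsic one for $G^{[\Gamma]}$ (to lift the resulting element of $(\frak{g}^{[\Gamma]})_{x,r:2r}$ to $h_0\in(G^{[\Gamma]})_{x,r}$). For $h_0$ to actually approximate $g$ inside $G$, you need the square relating $(G^{[\Gamma]})_{x,r:2r}\to G_{x,r:2r}$ to $(\frak{g}^{[\Gamma]})_{x,r:2r}\to\frak{g}_{x,r:2r}$ to commute; equivalently, you need the map $(G^{[\Gamma]})_{x,r:2r}\to G_{x,r:2r}$ to be injective with image $(G_{x,r:2r})^{\Gamma}$. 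That is precisely the graded form of the lemma you are proving, so asserting it amounts to assuming the inductive step. The $\Gamma$-equivariance of the Moy--Prasad isomorphism for $G$ gives you $(G_{x,r:2r})^\Gamma\simeq(\frak{g}_{x,r:2r})^\Gamma$, and Maschke plus \cite[Lem.~2.8]{Portilla2014} identifies the latter with $(\frak{g}^{[\Gamma]})_{x,r:2r}$ --- all of that is fine --- but none of it tells you that the element of $(G_{x,r})^\Gamma$ you produce by lifting actually lies in the subgroup $(G^{[\Gamma]})_{x,r}$ defined by the \emph{intrinsic} filtration of $G^{[\Gamma]}$, which is the whole point. (The ``easy'' inclusion $(G^{[\Gamma]})_{x,r}\subseteq G_{x,r}$ is likewise a compatibility statement rather than a formality, though it does follow from the Prasad--Yu/Portilla framework.)

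The paper sidesteps the iteration entirely by working with Yu's smooth affine $\frak{o}$-models $\bG_{x,r}$ from \cite{Yu2015}: since $x$ is $\Gamma$-fixed and $p\nmid|\Gamma|$, the group $\Gamma$ acts on the scheme $\bG_{x,r}$, its fixed-point subscheme is smooth, and Portilla's Lie algebra identity shows that $(\bG_{x,r})^{[\Gamma]}$ and $(\bG^{[\Gamma]})_{x,r}$ have the same $\frak{o}$-Lie algebra; being smooth and connected, they coincide, giving the group-level equality in one step. If you want to salvage your layered argument, the missing ingredient is exactly this schematic (or otherwise functorial) compatibility of the two filtrations at each graded piece --- at which point the iteration becomes unnecessary.
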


\begin{proof}
Since $\Gamma\subset\Aut_F(\bG)$ and $\Gamma$ fixes $x$, we have that $G_{x,r}$ is $\Gamma$-stable (see, for example, \cite[Lem. 2.2.1]{AdlerDeBacker2002}). 
Since $x\in\buil(G)^\Gamma$ and $p$ does not divide $|\Gamma|$, it follows that in fact $\Gamma \subset \Aut_{\frak{o}}(\bG_{x,r})$.  Thus 
\[\bG_{x,r}\cap\bG^{[\Gamma]}=(\bG_{x,r})^{[\Gamma]}.
\]
By \cite[Lemma 2.8]{Portilla2014} we have $\frak{g}_{x,r}\cap \frak{g}^\Gamma = (\frak{g}^{\Gamma})_{x,r}=(\frak{g}^{[\Gamma]})_{x,r}$, whence the $\frak{o}$-Lie algebra of $(\bG_{x,r})^{[\Gamma]}$ coincides with that of $(\bG^{[\Gamma]})_{x,r}$; the result follows by connectedness.
\end{proof}
%
%

\begin{remark}
During the proof of \cite[Lem. 2.7]{Portilla2014}, Portilla shows that in the setting of the above lemma we have
\[(G^{[\Gamma]})_{x,0}=G_{x,0} \cap G^{[\Gamma]};
\]
he includes the proof that $(G^{[\Gamma]})_{x,0+}=G_{x,0+} \cap G^{[\Gamma]}$ as \cite[Prop. 5.1]{Portilla2014}.
\end{remark}

Given a $\Gamma$-stable datum $\Sigma$, each twisted Levi subgroup $\bG^i$ occuring in $\Sigma$ is $\Gamma$-stable, so by Lemma~\ref{lem:gxrs} and the remark following, for each $0\leq i\leq d+1$ and any $s\geq 0$ we have
\[((G^i)^{[\Gamma]})_{x,s}=(G_{x,s}^i)^{[\Gamma]};
\]
we write $G_{x,s}^{i,[\Gamma]}$ for this group.

\begin{lemma}\label{lem:thm1}
Let $G'$ be a $\Gamma$-stable twisted Levi subgroup of $G$ and $x\in\buil(G')^\Gamma\subset\buil(G)$. For any $0<t<s$ let $K$ be a $\Gamma$-stable subgroup of $G_{x,s}$ normalized by $G'_{x,t}$ and containing $G'_{x,s}$.  Then we have
\[
(G_{x,t}'K)^{\Gamma}= {G'}^{[\Gamma]}_{x,t}K^\Gamma.  
\]
\end{lemma}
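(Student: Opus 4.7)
The plan is to prove the two inclusions separately. The containment $\supseteq$ is immediate: Lemma~\ref{lem:gxrs} applied to $G'$ yields ${G'}^{[\Gamma]}_{x,t} \subseteq G'_{x,t}$, and both ${G'}^{[\Gamma]}_{x,t}$ and $K^\Gamma$ are visibly $\Gamma$-fixed subsets of $G'_{x,t}K$.

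For the reverse inclusion, the approach is a non-abelian cohomology argument. Given $g \in (G'_{x,t}K)^\Gamma$, write $g = g_1 k_1$ with $g_1 \in G'_{x,t}$ and $k_1 \in K$; the goal is to modify this decomposition so that both factors are $\Gamma$-fixed. For each $\gamma \in \Gamma$, comparing $g = \gamma(g_1)\gamma(k_1)$ with $g = g_1 k_1$ gives $\gamma(g_1)^{-1}g_1 = \gamma(k_1)k_1^{-1}$, a common element of the pro-$p$ group $G'_{x,t} \cap K$; one verifies that $c(\gamma) = \gamma(g_1)^{-1}g_1$ is a continuous $1$-cocycle with respect to the natural $\Gamma$-action. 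The key step is then to invoke the vanishing of non-abelian $H^1(\Gamma, G'_{x,t} \cap K)$, which holds because $G'_{x,t} \cap K$ is pro-$p$ and $|\Gamma|$ is coprime to $p$ (a profinite Schur--Zassenhaus argument, \emph{cf.}~Serre's \emph{Cohomologie Galoisienne}). Writing $c(\gamma) = \gamma(b)^{-1}b$ for some $b \in G'_{x,t} \cap K$, the modified decomposition $g = (g_1 b^{-1})(b k_1)$ has $g_1 b^{-1} \in (G'_{x,t})^\Gamma$ and $b k_1 \in K^\Gamma$.

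To conclude, the identification $(G'_{x,t})^\Gamma = {G'}^{[\Gamma]}_{x,t}$ follows from Lemma~\ref{lem:gxrs} together with the observation that $G'_{x,t}$ is pro-$p$ while the component group ${G'}^\Gamma/{G'}^{[\Gamma]}$ has order coprime to $p$, so any $\Gamma$-fixed element of $G'_{x,t}$ must map trivially into the component group, hence lie in ${G'}^{[\Gamma]}$. The main obstacle will be invoking the non-abelian cohomology vanishing cleanly in the pro-$p$ setting; if a direct reference is awkward, one can instead construct $b$ by successive approximation along the Moy--Prasad filtration of $G'_{x,t} \cap K$, where at each step one works in an abelian graded piece and the required coboundary is produced by averaging over $\Gamma$, using that $|\Gamma|$ is invertible modulo $p$.
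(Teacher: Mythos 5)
Your argument is correct in outline but takes a genuinely different route from the paper's. The paper proceeds by downward induction on $t$ through the (discrete) Moy--Prasad filtration: given $gk\in(G'_{x,t}K)^\Gamma$ it observes that $g^{-1}\gamma(g)=k\gamma(k)^{-1}\in G'_{x,t}\cap K=G'_{x,s}$, so the image $\bar g\in G'_{x,t:t+}$ is $\Gamma$-fixed; it then uses Portilla's Lie-algebra identification $(\frak{g}'_{x,t:t+})^\Gamma\simeq(\frak{g}'^{[\Gamma]})_{x,t:t+}$ and the Moy--Prasad isomorphism to replace $g$ by a representative in $(G'^{[\Gamma]})_{x,t}$ up to an error in $(G'_{x,t+}K)^\Gamma$, and concludes by induction. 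Your version packages the entire correction into one application of $H^1(\Gamma,P)=1$ for $P=G'_{x,t}\cap K=G'_{x,s}$ pro-$p$ and $|\Gamma|$ prime to $p$, which is a legitimate and arguably cleaner way to split the decomposition $g=g_1k_1$ into $\Gamma$-fixed factors; your fallback of successive approximation along the filtration with averaging in the abelian graded pieces is, in effect, the paper's proof. What the cohomological shortcut buys is brevity; what the paper's induction buys is that it never has to assert anything about $\Gamma$-fixed points of $G'_{x,t}$ as a whole, only about graded pieces, where the identification with the fixed-point group is already available from \cite{Portilla2014}.

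The one step you should not leave as stated is the final identification $(G'_{x,t})^\Gamma={G'}^{[\Gamma]}_{x,t}$. Lemma~\ref{lem:gxrs} gives $G'_{x,t}\cap G'^{[\Gamma]}=(G'^{[\Gamma]})_{x,t}$, i.e.\ it concerns the intersection with the \emph{connected} fixed-point group, not the full set of $\Gamma$-fixed elements of $G'_{x,t}$; bridging the two is exactly the content of your appeal to the component group ${G'}^\Gamma/{G'}^{[\Gamma]}$ having order prime to $p$. That claim (essentially that $\pi_0(\bG'^\Gamma)$ is killed by $|\Gamma|$) is plausible but is nowhere proved or cited, and it is a nontrivial structural fact about fixed-point groups. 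Since you already have the successive-approximation mechanism at hand, the clean fix is to prove $(G'_{x,t})^\Gamma=(G'^{[\Gamma]})_{x,t}$ the same way: a $\Gamma$-fixed element of $G'_{x,t}$ has $\Gamma$-fixed image in each graded piece $G'_{x,r:r+}$, each such image lifts to $(G'^{[\Gamma]})_{x,r}$ by Portilla's isomorphism, and the resulting product converges by completeness. With that substitution your argument is complete and independent of any component-group input.
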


\begin{proof}
Evidently ${G'}^{[\Gamma]}_{x,t}K^\Gamma\subseteq (G_{x,t}'K)^{\Gamma}$.  If $t=s$ then as $G'_{x,s}\subseteq K$ the result is immediate.  
Fix $t \in (0,s)$ and suppose the result has been shown for all $r$ satisfying $t<r\leq s$; this amounts to a finite set of distinct equalities since the filtrations are discrete.

Let $h=gk\in(G_{x,t}'K)^{\Gamma}$ with $g\in G_{x,t}'$ and $k\in K$. Then for each $\gamma\in\Gamma$ we have that $\gamma(gk)=gk$, and therefore
\[g^{-1}\gamma(g)=k\gamma(k^{-1})\in G_{x,t}'\cap K= G_{x,s}'.
\]
Consequently, for all $\gamma\in\Gamma$ we have
\[g\equiv\gamma(g)\quad\text{mod }G_{x,s}'.
\]
Now let $\bar{g}$ denote the image of $g$ in $G_{x,t:t+}'$; since $s>t$ the preceding implies that $\bar{g}\in(G_{x,t:t+}')^{\Gamma}$.  
By the proof of \cite[Lem. 2.28]{Portilla2014}, we have a natural isomorphism
\[(\frak{g}_{x,t:t+}')^{\Gamma}\simeq(\frak{g}'^{[\Gamma]})_{x,t:t+};
\]
whence via the Moy--Prasad isomorphism we may choose  a representative $g'$ of $\bar{g}$ satisfying $g'\in (G'^{[\Gamma]})_{x,t}$.  By Lemma~\ref{lem:gxrs} we have $g'\in (G'_{x,t})^\Gamma$.  Thus  $(g')^{-1}gk\in (G_{x,t+}'K)^{\Gamma}$, which by induction is equal to $G_{x,t+}'^{[\Gamma]}K^\Gamma$.  We conclude that
$gk \in g'G_{x,t+}'^{[\Gamma]}K^\Gamma\subset G_{x,t}'^{[\Gamma]}K^\Gamma$, as required.
\end{proof}

\begin{theorem}\label{thm:simplechars}
Suppose that $\Sigma$ is a $\Gamma$-stable truncated datum for $G$ and
let $\Sigma^\Gamma$ be the corresponding truncated datum for $G^{[\Gamma]}$. Then one has an equality of semisimple characters
\[(H_+(\Sigma^\Gamma),\theta_{\Sigma^\Gamma})=(H_+(\Sigma)^\Gamma,\theta_\Sigma|_{H_+(\Sigma)^\Gamma}).
\]
\end{theorem}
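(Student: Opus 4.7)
The plan is to split the theorem into a group-level equality $H_+(\Sigma)^\Gamma = H_+(\Sigma^\Gamma)$ and a character-level equality $\theta_\Sigma|_{H_+(\Sigma)^\Gamma} = \theta_{\Sigma^\Gamma}$.  I will first establish both equalities with $\Sigma^\Gamma$ replaced by the intermediate tuple $\Sigma^\Gamma_{\mathrm{ext}}$ of Proposition~\ref{prop:gamma-data}, and then reduce to $\Sigma^\Gamma$ by absorbing the redundant factors arising from consecutive equal twisted Levis.

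For the group equality, I will iterate Lemma~\ref{lem:thm1} from the outermost factor inward.  Writing $H_+(\Sigma) = G^0_{x,0+} \cdot K_0$ with $K_i = G^{i+1}_{x,s_i+} \cdots G^{d+1}_{x,s_d+}$, each $K_i$ is $\Gamma$-stable, is normalized by $G^i_{x,s_{i-1}+}$ (since $G^i \subseteq G^j$ for $j > i$), and contains $G^i_{x,s_i+}$.  Lemma~\ref{lem:thm1} (applied with a sufficiently small positive $t$ in place of $0+$ in the outermost step, exploiting discreteness of the Moy--Prasad filtration) then yields
\[H_+(\Sigma)^\Gamma = G^{0,[\Gamma]}_{x,0+} G^{1,[\Gamma]}_{x,s_0+} \cdots G^{d+1,[\Gamma]}_{x,s_d+} = H_+(\Sigma^\Gamma_{\mathrm{ext}}).\]
Passing to $H_+(\Sigma^\Gamma)$ is then immediate: whenever $\bG^{i,[\Gamma]} = \bG^{i+1,[\Gamma]}$, we have $G^{i+1,[\Gamma]}_{x,s_i+} \subseteq G^{i,[\Gamma]}_{x,s_{i-1}+}$ (with $s_{-1}=0$), so the redundant factor is absorbed, matching the factor built from the merged datum.

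For the character equality, the key is to verify that on each overlap $G^{i,[\Gamma]}_x \cdot G^{[\Gamma]}_{x,s_i+}$ the restriction of $\hat{\phi}^i$ agrees with the analogous character $\hat{\phi}^{i,[\Gamma]}$ built from $\Sigma^\Gamma_{\mathrm{ext}}$.  Both reduce to $\phi^i$ on $G^{i,[\Gamma]}_x$, so the real content is on $G^{[\Gamma]}_{x,s_i+}$, where via Moy--Prasad one must compare two extensions of $X^i$ to an element of $(\frak{g}^{[\Gamma]})^*$: one obtained by extending $X^i \in (\frak{g}^i)^*$ by zero across $\frak{n}^i$ and restricting to $\frak{g}^{[\Gamma]}$, the other by extending $X^i|_{\frak{g}^{i,[\Gamma]}}$ by zero across $\frak{n}^{i,[\Gamma]}$.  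The crucial claim is the identity $(\frak{n}^i)^\Gamma = \frak{n}^{i,[\Gamma]}$ of subspaces of $\frak{g}^{[\Gamma]}$, which I plan to argue as follows: both are $Z(\bG^{i,[\Gamma]})^\circ$-invariant complements to $\frak{g}^{i,[\Gamma]}$ in $\frak{g}^{[\Gamma]}$---the invariance of $(\frak{n}^i)^\Gamma$ using that $Z(\bG^{i,[\Gamma]})^\circ \subseteq \bG^i$ preserves $\frak{n}^i$ and commutes with $\Gamma$---and by Lemma~\ref{lem:twisted-levis} $\frak{g}^{i,[\Gamma]}$ is the full $0$-isotypic part of $\frak{g}^{[\Gamma]}$ under $Z(\bG^{i,[\Gamma]})^\circ$, so by uniqueness of the invariant complement (Maschke, using $p \nmid |\Gamma|$) they coincide.

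Taking the product over $i$ then gives $\theta_\Sigma|_{H_+(\Sigma)^\Gamma} = \theta_{\Sigma^\Gamma_{\mathrm{ext}}}$, and it remains to observe that the merging step for $\Sigma^\Gamma$ replaces a product $\prod_{j=i}^{i+k}\hat{\phi}^{j,[\Gamma]}$ (all of whose factors share the common domain $G^{i,[\Gamma]}_x \cdot G^{[\Gamma]}_{x,s_{i+k}+}$ after restriction) with the single character realized by $(\sum_j X^j, \prod_j \phi^j)$---precisely $\hat{\phi}^\Gamma$ for the merged datum---by multiplicativity of the Moy--Prasad formulas for $\tilde{\phi}$ and of characters of $G^{i,[\Gamma]}$.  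I expect the main subtlety to be the Maschke argument sidestepping the mismatch between $Z(\bG^i)^{[\Gamma]}$ and $Z(\bG^{i,[\Gamma]})^\circ$ (which in general fail to coincide, as noted after Lemma~\ref{lem:twisted-levis}): one needs precisely that the two isotypic decompositions of $\frak{g}^{[\Gamma]}$ coincide even though the acting tori do not, and the argument succeeds only because uniqueness of the complement is a statement about the ambient group's action rather than about the specific torus.
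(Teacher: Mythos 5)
Your proposal is correct and follows essentially the same route as the paper's proof: iterated application of Lemma~\ref{lem:thm1} for the group equality, reduction of the character equality to the identification of $\frak{n}^\Gamma$ with the sum of the nontrivial $Z(\bG^{i,[\Gamma]})^\circ$-isotypic components of $\frak{g}^{[\Gamma]}$ (your ``uniqueness of the complement to the full trivial-isotypic part'' is exactly the paper's observation that the $\Gamma$-fixed decomposition is already isotypic for $Z(\bG^{i,[\Gamma]})^\circ$), and then the same absorption/multiplicativity argument for the merged indices. The only cosmetic difference is your framing of the key step via Maschke for $\Gamma$, whereas the relevant uniqueness is purely about the torus action; as you note yourself, this is where the mismatch between $Z(\bG^i)^{[\Gamma]}$ and $Z(\bG^{i,[\Gamma]})^\circ$ is harmless.
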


\begin{proof}
Let $\Sigma = (\vec{G},x,\vec{r},\vec{X}, \vec{\phi})$.  Recall that 
$H_+(\Sigma)=G^0_{x,0+}G^1_{x,s_0+}\cdots G^{d+1}_{x,s_d+}$.
Inductively applying Lemma~\ref{lem:thm1} yields the equality  
\[
H_+(\Sigma)^\Gamma = G^{0,[\Gamma]}_{x,0+}G^{1,[\Gamma]}_{x,s_0+}\cdots G^{d+1,[\Gamma]}_{x,s_d+},
\]
which is the group $H_+(\Sigma^\Gamma_{\mathrm{ext}})$ as defined in  \eqref{eq:H+}.
Since $0<s_0<\cdots < s_d$, if $G^{i,[\Gamma]}=G^{i+1,[\Gamma]}=\cdots = G^{i+k,[\Gamma]}$, then 
$$
G^{i,[\Gamma]}_{x,s_{i-1+}}G^{i+1,[\Gamma]}_{x,s_i+}\cdots G^{i+k,[\Gamma]}_{x,s_{i+k-1}+} = G^{i,[\Gamma]}_{x,s_{i-1}+}.
$$
Thus we have $H(\Sigma^\Gamma)=H(\Sigma^\Gamma_{\mathrm{ext}})=H_+(\Sigma)^\Gamma$.

For each $0\leq i \leq d$, we now compare the characters $\hat{\phi}^i$ defined in \eqref{eq:phihat} produced from these various tuples.

Let $\bZ^i = Z(\bG^i)^\circ$ and $\bZ^i_{\Gamma}=
Z(\bG^{i,[\Gamma]})^\circ$ and 
as usual let $Z^i=\bZ^i(F)$ and $Z^i_{\Gamma}=\bZ^i_{\Gamma}(F)$.
Then under $Z^i$ we have the isotypic decomposition $\frak{g} = \frak{g}^i\oplus \frak{n}$ where $\frak{g}^i$ is the $1$-isotypic subspace.  Since $Z^i$ is $\Gamma$-stable, so is $\frak{n}$, and taking $\Gamma$-fixed points yields a decomposition $\frak{g}^{[\Gamma]} = \frak{g}^{i,[\Gamma]} \oplus \frak{n}^\Gamma$.  Since  $\frak{g}^{i,[\Gamma]}$ is the $1$-isotypic component of $\frak{g}^{[\Gamma]}$ under  $Z^i_{\Gamma}$, this $Z^i_{\Gamma}$-invariant decomposition is isotypic.  It follows that  $\tilde{\phi^i}$, the character of $G_{x,s_i+}$ arising from the trivial extension across $\frak{n}$ of $\psi\circ X^i$, restricts to the extension $\tilde{\phi}^{i,\Gamma}$ of $\psi\circ X^i$ to $G^{[\Gamma]}_{x,s_i+}$.  Consequently, the character $\hat{\phi}^i$ of $G^i_xG_{x,s_i+}$ restricts to the corresponding character $\hat{\phi}^{i,[\Gamma]}$ of $G^{i,[\Gamma]}_xG^{[\Gamma]}_{x,s_i+}$.

Now suppose that in the above setting we have $G^{i,[\Gamma]}=G^{i+1,[\Gamma]}=\cdots = G^{i+k,[\Gamma]}$ and have therefore in $\Sigma^\Gamma$ replaced $(X^i, \cdots, X^{i+k})$ with $\sum_{j=i}^{i+k}X^j$.  It suffices to note that the restriction to $\frak{g}^{[\Gamma]}_{x,s_{i+k}+}$ of $\psi\circ \sum_{j=i}^{i+k}X^j = \prod_{j=i}^{i+k}\psi\circ X^j$ lifts to a well-defined character of this smallest subgroup $G^{[\Gamma]}_{x,s_{i+k}+}$, and that it moreover agrees with $\prod_{j=i}^{i+k}\phi^j$ on the intersection $G^{i,[\Gamma]}_x\cap G^{[\Gamma]}_{x,s_{i+k}+}$.  Thus in the notation of \eqref{eq:phihat}, we have
$$
\prod_{j=i}^{i+k}\hat{\phi}^{j,[\Gamma]} = \widehat{\prod_{j=i}^{i+k}\phi^{j,[\Gamma]}}.
$$ 
From these two paragraphs it follows that on the subgroup $H_+(\Sigma^\Gamma)$ we have the desired equality of semisimple characters 
$\theta_\Sigma|_{H_+(\Sigma)^\Gamma}=\theta_{\Sigma^\Gamma_{\mathrm{ext}}}=\theta_{\Sigma^\Gamma}$.
%
%
\end{proof}

\section{Restricting representations to $G^{[\Gamma]}$}\label{S:HM}

We now fix a $\Gamma$-stable truncated datum $\Sigma=(\vec{G},x,\vec{r},\vec{X},\vec{\phi})$ and complete it to a datum $(\Sigma;\sigma)$ 
as in Definition \ref{def:datum}; 
in order to simplify the notation in this section, we also denote this completed datum by $\Sigma$, understanding $\sigma$ to be fixed.
 Via \cite[\S 6.3]{MoyPrasad1996}, the point $x\in\buil(G^0)$ defines a Levi subgroup $M^0$ of $G^0$; let $M$ denote the $G$-centralizer of the centre of $M^0$, so that $M$ is a Levi subgroup of $G$. Then $(J(\Sigma),\lambda_{\Sigma})$ is a $G$-cover of $(J(\Sigma)\cap M),\lambda_{\Sigma})$, and every irreducible subquotient of $\cInd_{J(\Sigma)\cap M}^M\ \lambda_{\Sigma}$ is a supercuspidal representation of $M$, with any two such subquotients differing by a twist by some unramified character of $M$.  Let $\pi_M$ denote any such subquotient.

Given a parabolic subgroup $P$ of $G$ with Levi factor $M$, one has a finite-length parabolically induced representation $\Ind_P^G\ \pi_M\otimes 1$. Throughout this section, $\pi$ will denote an arbitrary (not necessarily irreducible) subquotient of $\Ind_P^G\ \pi_M$. The representation $\pi|_{G^{[\Gamma]}}$ is 
very often a non-semisimple representation of infinite length.  Nonetheless, in this section we prove a result which allows one to explicitly identify a number of irreducible subquotients of $\pi|_{G^{[\Gamma]}}$, up to $G^{[\Gamma]}$-inertial equivalence.  We do this by studying the decomposition of $\lambda_\Sigma|_{J(\Sigma)\cap G^{[\Gamma]}}$ into irreducible components. 

Recall that one has a decomposition $\lambda_{\Sigma}=\sigma\otimes\kappa_{\Sigma}$, with $\kappa_{\Sigma}$ being a Heisenberg--Weil lift of $\theta_\Sigma$. Since $\Sigma$ is $\Gamma$-stable, one has by Theorem~\ref{thm:simplechars} that $H_+(\Sigma)\cap G^{[\Gamma]}=H_+(\Sigma^\Gamma)$ and  $\theta_\Sigma|_{H_+(\Sigma^\Gamma)}=\theta_{\Sigma^\Gamma}$.  Our first goal is to establish an analogous relationship between the Heisenberg--Weil representations.

\begin{lemma}
The group $J(\Sigma^\Gamma)$ is contained in $J(\Sigma)\cap G^{[\Gamma]}$ with finite index.
\end{lemma}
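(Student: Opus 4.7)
The plan is to reduce everything to the analogous (already proved) identity for $H_+$, exploiting the pro-$p$ structure there to absorb the remaining reductive-level discrepancy between $J(\Sigma^\Gamma)$ and $J(\Sigma)\cap G^{[\Gamma]}$. For the containment, I would just inspect generators: the truncated datum $\Sigma^\Gamma$ produces
\[
J(\Sigma^\Gamma) = G^{0,[\Gamma]}_x \cdot G^{1,[\Gamma]}_{x,s_0} \cdots G^{d+1,[\Gamma]}_{x,s_d}
\]
(after the collapsing convention of Proposition~\ref{prop:gamma-data}). Each factor is by construction contained in $G^{[\Gamma]}$, and Lemma~\ref{lem:gxrs} for $i\geq 1$, together with the inclusion $G^{0,[\Gamma]}_x \subseteq G^0_x$ coming from the stabilizer definition, shows that each factor is also contained in the corresponding factor of $J(\Sigma)$. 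Hence $J(\Sigma^\Gamma)\subseteq J(\Sigma)\cap G^{[\Gamma]}$.

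For the finite-index statement, the key observation is that $H_+(\Sigma)$ is a normal pro-$p$ subgroup of $J(\Sigma)$ whose quotient is a finite group: the successive subquotients $G^i_{x,s_{i-1}:s_{i-1}+}$ for $i\geq 1$ are finite (being the $\frak{f}$-points of a connected unipotent group scheme), and $G^0_x/G^0_{x,0+}$ is an extension of the finite group $G^0_x/G^0_{x,0}$ (finite because $G^0_x$ is compact in the extended Bruhat--Tits building) by the finite group of Lie type $G^0_{x,0:0+}$. By Theorem~\ref{thm:simplechars} one has $H_+(\Sigma) \cap G^{[\Gamma]} = H_+(\Sigma^\Gamma)$, so the composition
\[
J(\Sigma)\cap G^{[\Gamma]} \hookrightarrow J(\Sigma) \twoheadrightarrow J(\Sigma)/H_+(\Sigma)
\]
has kernel exactly $H_+(\Sigma^\Gamma)$, inducing an injection of $(J(\Sigma)\cap G^{[\Gamma]})/H_+(\Sigma^\Gamma)$ into the finite group $J(\Sigma)/H_+(\Sigma)$. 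Since $H_+(\Sigma^\Gamma) \subseteq J(\Sigma^\Gamma)$ by construction, the index of $J(\Sigma^\Gamma)$ in $J(\Sigma)\cap G^{[\Gamma]}$ is bounded by $|J(\Sigma)/H_+(\Sigma)|$ and is therefore finite.

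The main point is thus structural rather than technical: all the actual work was done in proving the $H_+$ identity, and what remains is to note that the quotient ``outside $H_+$'' is finite on the ambient $J(\Sigma)$ for reasons independent of $\Gamma$-invariance. Whether the inclusion is in fact an equality is a strictly stronger statement that is not needed downstream; the obstruction to equality would amount to comparing $G^{0,[\Gamma]}_x$ with $(G^0_x)^\Gamma$ and tracking the component-group discrepancy between $G^\Gamma$ and $G^{[\Gamma]}$, which is in general nontrivial, so I would not attempt it here.
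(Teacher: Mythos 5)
Your proof is correct, and it follows the same structural idea as the paper's --- bound the index by passing to a finite quotient of $J(\Sigma)$ by a normal subgroup whose intersection with $G^{[\Gamma]}$ is already understood and sits inside $J(\Sigma^\Gamma)$ --- but you instantiate it with a different normal subgroup. The paper introduces the intermediate group $J_+(\Sigma)=G^0_{x,0+}G^1_{x,s_0}\cdots G^{d+1}_{x,s_d}$ and proves the new identity $J_+(\Sigma)^\Gamma=J_+(\Sigma^\Gamma)$ by another inductive application of Lemma~\ref{lem:thm1}, which yields the sharper bound $[J(\Sigma)\cap G^{[\Gamma]}:J(\Sigma^\Gamma)]\leq\vert(G^0_x/G^0_{x,0+})^\Gamma\vert$ (and the group $J_+(\Sigma^\Gamma)$ is reused later in Section~\ref{S:HM}). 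You instead quotient by $H_+(\Sigma)$ and invoke Theorem~\ref{thm:simplechars} directly, which costs you nothing new to prove and gives the coarser bound $\vert J(\Sigma)/H_+(\Sigma)\vert$; since only finiteness is needed, this is perfectly adequate, and your explicit verification of the containment $J(\Sigma^\Gamma)\subseteq J(\Sigma)\cap G^{[\Gamma]}$ by inspecting generators is a point the paper leaves implicit. One small remark: your appeal to Theorem~\ref{thm:simplechars} for $H_+(\Sigma)\cap G^{[\Gamma]}=H_+(\Sigma^\Gamma)$ silently uses that $H_+(\Sigma)^\Gamma=H_+(\Sigma)\cap G^\Gamma$ already lands in the identity component $G^{[\Gamma]}$; this does follow from the theorem's equality with $H_+(\Sigma^\Gamma)$, whose factors are filtration subgroups of the $G^{i,[\Gamma]}$, but it is worth saying.
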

\begin{proof}
Let $J_+(\Sigma)=G_{x,0+}^0G_{x,s_0}^1\cdots G_{x,s_{d}}^{d+1}$.  This is a $\Gamma$-invariant normal subgroup of $J(\Sigma)$ of finite index and by inductively applying Lemma~\ref{lem:thm1} we deduce that $J_+(\Sigma)^\Gamma=J_+(\Sigma^\Gamma_{\mathrm{ext}})$, which in turn is simply equal to $J_+(\Sigma^\Gamma)$.  Taking $\Gamma$-fixed points of the short exact sequence
$$
1 \to J_+(\Sigma) \to J(\Sigma) \to G^0_{x}/G^0_{x,0+}\to 1
$$
gives an exact sequence 
$$
1 \to J_+(\Sigma)^\Gamma \to J(\Sigma)^\Gamma \to (G^0_{x}/G^0_{x,0+})^\Gamma.
$$
Since $J_+(\Sigma)^\Gamma = J_+(\Sigma^\Gamma)\subseteq J(\Sigma^\Gamma)$, this yields a coarse estimate
$$
[J(\Sigma)\cap G^{[\Gamma]}:J(\Sigma^\Gamma)] \leq
[J(\Sigma)^\Gamma:J_+(\Sigma)^\Gamma] \leq \vert (G^0_{x}/G^0_{x,0+})^\Gamma \vert.
$$
\end{proof}


Note that the representation $\kappa_\Sigma$ of $J(\Sigma)$ restricts to $H_+(\Sigma)$ as a sum of copies of $\theta_\Sigma$, and so  must  restrict further to $H_+(\Sigma^\Gamma)$ as a sum of copies of $\theta_{\Sigma^\Gamma}$. In particular, any irreducible component of $\kappa_\Sigma|_{J(\Sigma^\Gamma)}$ restricts to $H_+(\Sigma^\Gamma)$ as a sum of copies of $\theta_{\Sigma^\Gamma}$.

\begin{proposition}
The restriction to $J(\Sigma^\Gamma)$ of $\kappa_\Sigma$ is $\kappa_{\Sigma^\Gamma}$-isotypic.
\end{proposition}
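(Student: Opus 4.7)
The plan is to apply the Stone--von Neumann theorem to the Heisenberg piece underlying $\kappa_\Sigma$, and then to handle the Weil extension. Since $J(\Sigma^\Gamma)$ is compact and $\kappa_\Sigma$ is finite-dimensional, $\kappa_\Sigma|_{J(\Sigma^\Gamma)}$ is a direct sum of irreducible components, each of which, by the discussion preceding the proposition, restricts to $H_+(\Sigma^\Gamma)$ as a sum of copies of $\theta_{\Sigma^\Gamma}$. It therefore suffices to show every such component is isomorphic to $\kappa_{\Sigma^\Gamma}$.

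First I would analyze the Heisenberg part. Let $V = J_+(\Sigma)/H_+(\Sigma)$, which by the construction of $\kappa_\Sigma$ is a symplectic $\mathbb{F}_p$-vector space carrying the form $\langle \bar g,\bar h\rangle = \theta_\Sigma([g,h])$; the natural $\Gamma$-action preserves this form because $\Sigma$ is $\Gamma$-stable. By the preceding lemma together with Maschke's theorem, $V^\Gamma$ identifies with $J_+(\Sigma^\Gamma)/H_+(\Sigma^\Gamma)$. Choosing a $\Gamma$-stable complement $V'$ to $V^\Gamma$ in $V$, the $\Gamma$-invariance of the form gives $|\Gamma|\langle v,w\rangle = \langle v, \sum_{\gamma\in\Gamma}\gamma\cdot w\rangle$ for $v\in V^\Gamma$ and $w\in V'$; since $\sum_\gamma\gamma\cdot w$ lies in $V^\Gamma\cap V'=\{0\}$ and $p\nmid|\Gamma|$, the subspaces $V^\Gamma$ and $V'$ are symplectically orthogonal. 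Hence the form restricts non-degenerately to $V^\Gamma$, and by Theorem~\ref{thm:simplechars} this restriction coincides with the symplectic form underlying $\kappa_{\Sigma^\Gamma}$. The Stone--von Neumann theorem then yields that the Heisenberg representation $\eta_\Sigma = \kappa_\Sigma|_{J_+(\Sigma)}$ further restricts to $J_+(\Sigma^\Gamma)$ as a sum of copies of $\eta_{\Sigma^\Gamma}$, the Heisenberg representation underlying $\kappa_{\Sigma^\Gamma}$.

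To upgrade this to $\kappa_{\Sigma^\Gamma}$-isotypy on all of $J(\Sigma^\Gamma)$, I would note that every irreducible extension of $\eta_{\Sigma^\Gamma}$ from $J_+(\Sigma^\Gamma)$ to $J(\Sigma^\Gamma)$ has the form $\kappa_{\Sigma^\Gamma}\otimes\chi$ for some character $\chi$ of $J(\Sigma^\Gamma)/J_+(\Sigma^\Gamma)$. Thus each irreducible component of $\kappa_\Sigma|_{J(\Sigma^\Gamma)}$ is of this shape, and the remaining task is to rule out nontrivial $\chi$. I would do this by exploiting the level-wise factorization of $\kappa_\Sigma$ recalled from \cite[\S2.3, 2.4]{HakimMurnaghan2008}, comparing each level's Weil extension with the corresponding factor of $\kappa_{\Sigma^\Gamma}$; the $\Gamma$-equivariance of the underlying symplectic data at each level, together with the naturality of the Weil construction, should force compatibility.

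The main obstacle is this last step: the Weil representation is canonical only up to a character twist, so the comparison must be made explicitly within the Yu--Hakim--Murnaghan framework. Resolving the normalization ambiguity likely amounts to verifying that the reductive quotients $G^0_x/G^0_{x,0+}$ and $G^{0,[\Gamma]}_x/G^{0,[\Gamma]}_{x,0+}$ interact with the Weil lifts in a compatible way, once again leveraging the coprimality of $|\Gamma|$ and $p$ to transfer the $\Gamma$-invariant structure from the ambient symplectic space to its fixed-point subspace.
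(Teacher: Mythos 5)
Your overall strategy---Stone--von Neumann for the Heisenberg part, then uniqueness considerations for the Weil extension---is the same as the paper's, and your symplectic-orthogonality computation showing that $V^\Gamma$ and its $\Gamma$-stable complement are perpendicular (hence that the form stays nondegenerate on $V^\Gamma$) is a correct and pleasant way to see why the fixed-point space is again a symplectic space. But the decisive step is the one you defer with ``should force compatibility'' and ``likely amounts to verifying,'' and as written this is a genuine gap: you correctly observe that an arbitrary irreducible extension of $\eta_{\Sigma^\Gamma}$ to $J(\Sigma^\Gamma)$ is only determined up to a character of $J(\Sigma^\Gamma)/J_+(\Sigma^\Gamma)$, and that quotient (a finite group of Lie type) certainly has nontrivial characters, so nothing you have said rules out the twist. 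The way this is closed in the paper is not by analyzing characters of the reductive quotient at all, but by the essential uniqueness of Yu's extension $\hat{\eta}^i_\Sigma$ of the Heisenberg representation to the semidirect product $J^i(\Sigma)\ltimes\mathcal{J}^{i+1}(\Sigma)$: the action factors through the finite symplectic group $\mathrm{Sp}(\mathcal{W}_\Sigma)$, whose Weil representation is genuinely unique---not merely unique up to character---except in the single case $p=3$, $\dim(\mathcal{W}_\Sigma)=2$, where the ambiguity can be fixed consistently (\cite[\S2.3]{HakimMurnaghan2008}, \cite[\S10]{Yu2001}). Once both $\hat{\eta}^i_\Sigma$ and $\hat{\eta}^i_{\Sigma^\Gamma}$ are pinned down canonically, restriction automatically matches them, and there is no character to rule out. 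Your statement that ``the Weil representation is canonical only up to a character twist'' is therefore the wrong starting point; the whole proof turns on the fact that it is canonical full stop.

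A second, more structural issue: you work with the single quotient $V=J_+(\Sigma)/H_+(\Sigma)$, whereas the construction of $\kappa_\Sigma$ is inherently level-by-level, $\kappa_\Sigma=\bigotimes_{i=0}^d\kappa^i_\Sigma$, with the Weil extension at level $i$ being a representation of $J^i(\Sigma)$ acting on $\mathcal{W}^{i+1}=\mathcal{J}^{i+1}(\Sigma)/\mathcal{J}^{i+1}_+(\Sigma)$, not of $J(\Sigma)/J_+(\Sigma)$ acting on $V$. Your global Heisenberg argument can probably be repaired, but the uniqueness argument above must be run at each level $i$ separately, and each $\kappa^i_\Sigma(gj)=\phi^i(g)\hat{\eta}^i_\Sigma(g,j)$ also carries the factor $\phi^i$, whose compatibility under restriction is exactly Theorem~\ref{thm:simplechars}. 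You should also account for the collapsing levels where $G^{i,[\Gamma]}=G^{i+1,[\Gamma]}$, in which case $\mathcal{W}_{\Sigma^\Gamma}$ is trivial and the level contributes only the character $\hat{\phi}^{i,[\Gamma]}$; your global formulation silently absorbs this, but a complete proof needs to check it.
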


\begin{proof}
The representation $\kappa_\Sigma$ is defined as a tensor product $\bigotimes_{i=0}^d\kappa_\Sigma^i$ of certain representations $\kappa_\Sigma^i$.  To prove the proposition, we recall the construction of these representations following \cite{HakimMurnaghan2008} and show at the same time  that for each $i$, $\kappa_\Sigma^i|_{J(\Sigma^\Gamma)}$ is $\kappa_{\Sigma^\Gamma}^i$-isotypic. We choose notation compatible with \cite[Proposition 7.5]{LathamNevins2020}; in particularly note that the groups denoted with script $\mathcal{J}$ here are those denoted with $J$ in \cite{Yu2001}. 

For each $0\leq i\leq d$, let $J^i(\Sigma)=J(\Sigma)\cap G^i$.  Our first step is to define the groups 
\[
\mathcal{J}^{i+1}(\Sigma)=(G^i,G^{i+1})_{x,(r_i,s_i)}
\quad\text{and}\quad\mathcal{J}^{i+1}_+(\Sigma)=(G^i,G^{i+1})_{x,(r_i,s_i+)},
\]
that are ``almost complements" of $J^i(\Sigma)$ in $J^{i+1}(\Sigma)$ in the sense that for each $i$, they give the equality $J^{i+1}(\Sigma)=J^i(\Sigma)\mathcal{J}^{i+1}(\Sigma)$ and their intersection is merely $G^i_{x,r_i}$.

Choose a maximal torus $\bS$ in $\bG^{0,[\Gamma]}$ with tamely ramified splitting field $E$ such that $x\in \apart(\bG^{0,[\Gamma]},\bS,E) \cap \buil(G^0)$.  Let $\bT$ be its centralizer, which is a $\Gamma$-stable maximal torus by Lemma~\ref{lem:tori}.  Let $\Phi^i=\Phi(\bG^{i},\bT,E)$ and $\Phi^i_{\Gamma} = \Phi(\bG^{i,[\Gamma]},\bS,E)$ be the corresponding root systems.  Writing $U_\alpha$ for the root subgroup corresponding to $\alpha$ over $E$, with filtration subgroups $U_{\alpha,x,s}$, we define
\[
\mathcal{J}^{i+1}(\Sigma)(E)=\langle \bT(E)_{r_i}, U_{\alpha,x,r_i}, U_{\beta,x,s_i}\ |\ \alpha \in\Phi^{i}, \beta \in \Phi^{i+1}\setminus\Phi^i\rangle,
\]
and set $\mathcal{J}^{i+1}(\Sigma) = \mathcal{J}^{i+1}(\Sigma)(E)\cap G^{i+1}$.  The group $\mathcal{J}^{i+1}(\Sigma^\Gamma)=\mathcal{J}^{i+1}(\Sigma^\Gamma_{\mathrm{ext}})$ is defined analogously.  The compatibility of the root systems shown in Lemma~\ref{lem:tori} gives us the inclusion $\mathcal{J}^{i+1}(\Sigma^\Gamma)\subseteq \mathcal{J}^{i+1}(\Sigma)$.  
Replacing $s_i$ with $s_i+$ in the above definition gives also $\mathcal{J}_+^{i+1}(\Sigma^\Gamma)\subseteq \mathcal{J}_+^{i+1}(\Sigma)$.


The character  $\hat{\phi}^i$, restricted to $\mathcal{J}_+^{i+1}(\Sigma)\subseteq G^{i+1}_{x,s_i+}$, has kernel denoted $\mathcal{N}^{i+1}(\Sigma)$. 
Note that this kernel is uniquely determined by the image of $X^i$ in $\frak{g}^*_{x,-r_i:-r_i+}$.
Since the character $\hat{\phi}^{i,[\Gamma]}$ associated to $\Sigma^\Gamma_{\mathrm{ext}}$ is the restriction of the character $\hat{\phi}^i$ associated to $\Sigma$, we have $\mathcal{N}^{i+1}(\Sigma^\Gamma)\subset\mathcal{N}^{i+1}(\Sigma)$.

We next define
\[
\mathcal{H}_\Sigma =\mathcal{J}^{i+1}(\Sigma)/\mathcal{N}^{i+1}(\Sigma), \text{and}\;
\mathcal{Z}_\Sigma =\mathcal{J}^{i+1}_+(\Sigma)/\mathcal{N}^{i+1}(\Sigma),
\]
as well as the analogous groups defined relative to $\Sigma^\Gamma$.  The group $\mathcal{H}_\Sigma$ is a $p$-Heisenberg group with centre $\mathcal{Z}_\Sigma$. By construction, the inclusion $\mathcal{J}^{i+1}(\Sigma^\Gamma)\subseteq \mathcal{J}^{i+1}(\Sigma)$ induces an embedding $\mathcal{H}_{\Sigma^\Gamma} \to \mathcal{H}_{\Sigma}$.  Since the centres are both isomorphic to $\mathbb{F}_p$, 
the map $\mathcal{Z}_{\Sigma^\Gamma}\to \mathcal{Z}_\Sigma$ induced by the embedding is an isomorphism.  

Since $\hat{\phi}^i$ gives a (non-trivial) character of $\mathcal{Z}_\Sigma\cong \mathcal{Z}_{\Sigma^\Gamma}$, the Stone--von Neumann theorem associates to $\hat{\phi}^i$ both a unique irreducible representation $\eta_\Sigma^i$ of $\mathcal{H}_\Sigma$ and a unique irreducible representation $\eta_{\Sigma^\Gamma}^i$ of $\mathcal{H}_{\Sigma^\Gamma}$. 
In particular, any irreducible component of $\eta_\Sigma^i|_{\mathcal{H}_{\Sigma^\Gamma}}$ is an irreducible representation of $\mathcal{H}_{\Sigma^\Gamma}$ with central character $\hat{\phi}^i$; by uniqueness,  
the restriction $\eta_\Sigma^i|_{\mathcal{H}_{\Sigma^\Gamma}}$ must be $\eta_{\Sigma^\Gamma}^i$-isotypic.

Next, we define the group $\mathcal{W}_\Sigma=\mathcal{J}^{i+1}(\Sigma)/\mathcal{J}_+^{i+1}(\Sigma)$. When nontrivial, this group naturally carries the structure of an $\frak{f}$-vector space, on which the character $\hat{\phi}^i$ defines a symplectic form by composing with the commutator. 
Carrying out the analogous construction with $\Sigma^\Gamma$ yields an embedding
$\mathcal{W}_{\Sigma^\Gamma} \to \mathcal{W}_\Sigma$ that respects these symplectic structures. 
As shown carefully in \cite[\S10]{Yu2001}, via the conjugation action of $J^i(\Sigma)$ on $\mathcal{J}^{i+1}(\Sigma)$ we may by the theory of the Heisenberg--Weil lift extend $\eta_\Sigma^i$ to a representation $\hat{\eta}_\Sigma^i$ of $J^i(\Sigma)\ltimes \mathcal{J}^{i+1}(\Sigma)$, and similarly for $\Sigma^\Gamma$.  
Moreover, up to a minor choice (that can be made consistently) when $p=3$ and $\dim(\mathcal{W}_\Sigma)=2$ \cite[\S2.3]{HakimMurnaghan2008}, this extension is unique.  
It therefore follows that the restriction of $\hat{\eta}_\Sigma^i$ to $J^i(\Sigma^\Gamma)\ltimes \mathcal{J}^{i+1}(\Sigma^\Gamma)$ is $\hat{\eta}_{\Sigma^\Gamma}^i$-isotypic.

Note that when $G^{i,[\Gamma]}=G^{i+1,[\Gamma]}$ we simply have $\mathcal{J}^{i+1}(\Sigma^\Gamma)=\mathcal{J}^{i+1}_+(\Sigma^\Gamma)=G^{i,[\Gamma]}_{x,r_i}$, so that $\mathcal{H}_{\Sigma^\Gamma}=\mathcal{Z}_{\Sigma^\Gamma}$, $\eta^i_{\Sigma^\Gamma}=\hat{\phi}^i$ and $\mathcal{W}_{\Sigma^\Gamma}$ is trivial.  In this case,  $\eta^i_{\Sigma^\Gamma}=1\ltimes \phi^{i,[\Gamma]}|_{G^{i,[\Gamma]}_{x,r_i}}$ and $\mathcal{J}^{i+1}(\Sigma^\Gamma) \subset J^i(\Sigma^\Gamma)=J^{i+1}(\Sigma^\Gamma)$.

Finally, the representation $\kappa_\Sigma^i$ is defined by making the identification $J^{i+1}(\Sigma)=J^i(\Sigma)\mathcal{J}^{i+1}(\Sigma)$ in order to define
\[\kappa_\Sigma^i(gj)=\phi^i(g)\hat{\eta}^i_\Sigma(g,j),
\]
for $g\in J^i(\Sigma)$ and $j\in\mathcal{J}^{i+1}(\Sigma)$. This representation may then be inflated to the group $J(\Sigma)$.  Following the same process for $\Sigma^{\Gamma}$ yields the representation $\kappa^i_{\Sigma^\Gamma}$, and we deduce 
that the restriction of $\kappa_\Sigma^i$ is $\kappa_{\Sigma^\Gamma}^i$-isotypic, as required. 
\end{proof}

From this, one immediately deduces the following.

\begin{corollary}\label{cor:xi}
Any irreducible component of $(\sigma\otimes\kappa_\Sigma)|_{J(\Sigma^\Gamma)}$ is isomorphic to $\xi\otimes\kappa_{\Sigma^\Gamma}$ for some irreducible component $\xi$ of $\sigma|_{J(\Sigma^\Gamma)}$.
\end{corollary}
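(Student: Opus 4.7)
The plan is to combine the preceding Proposition with a standard decomposition of tensor products and a separate irreducibility check.

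First, I apply the Proposition to write $\kappa_\Sigma|_{J(\Sigma^\Gamma)}\cong W\otimes\kappa_{\Sigma^\Gamma}$ as a $J(\Sigma^\Gamma)$-representation, where $W$ is the multiplicity space equipped with the trivial $J(\Sigma^\Gamma)$-action. Decomposing $\sigma|_{J(\Sigma^\Gamma)}=\bigoplus_\xi \xi^{\oplus m_\xi}$ into isotypic components, the distributivity of tensor product over direct sum yields
$$
(\sigma\otimes\kappa_\Sigma)|_{J(\Sigma^\Gamma)}\;\cong\;\sigma|_{J(\Sigma^\Gamma)}\otimes\kappa_\Sigma|_{J(\Sigma^\Gamma)}\;\cong\;\bigoplus_\xi\bigl(\xi\otimes\kappa_{\Sigma^\Gamma}\bigr)^{\oplus m_\xi\dim W}.
$$
Consequently, every irreducible component of the left-hand side is an irreducible component of $\xi\otimes\kappa_{\Sigma^\Gamma}$ for some irreducible component $\xi$ of $\sigma|_{J(\Sigma^\Gamma)}$.

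Second, I check that each such tensor product $\xi\otimes\kappa_{\Sigma^\Gamma}$ is \emph{itself} irreducible, so that the irreducible component is actually isomorphic to $\xi\otimes\kappa_{\Sigma^\Gamma}$ as the statement asserts. The argument mirrors the standard verification in the Yu construction that $\lambda_\Sigma=\sigma\otimes\kappa_\Sigma$ is irreducible: the representation $\xi$ extends to $J(\Sigma^\Gamma)$ trivially across its pro-$p$ factors (this triviality is inherited from the fact that $\sigma$ is trivial on $G^0_{x,0+}$ and extends trivially across the pro-$p$ factors of $J(\Sigma)$), while $\kappa_{\Sigma^\Gamma}$ is itself irreducible and restricts to the normal subgroup $H_+(\Sigma^\Gamma)$ as a sum of copies of the character $\theta_{\Sigma^\Gamma}$. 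A Mackey-type intertwining calculation combined with Schur's lemma then forces $\mathrm{End}_{J(\Sigma^\Gamma)}(\xi\otimes\kappa_{\Sigma^\Gamma})$ to be one-dimensional.

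The main (modest) obstacle is not conceptual but rather bookkeeping: one must keep careful track of how the various Moy--Prasad filtration subgroups appearing in $J(\Sigma)$ intersect $G^{[\Gamma]}$, and verify that the triviality and isotypy properties required by the Yu-style intertwining argument continue to hold after restriction. Fortunately, the compatibility of the filtrations with $\Gamma$-fixed points has already been established in Lemma~\ref{lem:thm1} and Theorem~\ref{thm:simplechars}, and the isotypy of $\kappa_\Sigma|_{J(\Sigma^\Gamma)}$ is precisely the content of the preceding Proposition, so the verification amounts to transcribing the Yu irreducibility argument into the fixed-point setting.
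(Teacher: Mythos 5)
Your proposal is correct and follows the same route the paper intends: the paper derives the corollary as an immediate consequence of the preceding proposition, exactly via the decomposition $(\sigma\otimes\kappa_\Sigma)|_{J(\Sigma^\Gamma)}\cong\sigma|_{J(\Sigma^\Gamma)}\otimes W\otimes\kappa_{\Sigma^\Gamma}$ that you write down. Your second step (irreducibility of $\xi\otimes\kappa_{\Sigma^\Gamma}$, via the fact that $\xi$ is trivial on $J_+(\Sigma^\Gamma)$ while $\kappa_{\Sigma^\Gamma}$ restricts irreducibly there) is left implicit in the paper but is the standard argument from the Yu construction, so you have only made explicit what the authors take for granted.
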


We therefore have achieved a decomposition into irreducible components of $\lambda_\Sigma|_{J(\Sigma^\Gamma)}$. Let us now apply this to the study of $\pi|_{G^{[\Gamma]}}$.

If the representation $\xi$ is an irreducible component of $\sigma|_{J(\Sigma^\Gamma)}$ that is cuspidal when viewed as a representation of $G_{x,0:0+}^{0,[\Gamma]}$, then $\xi\otimes\kappa_{\Sigma^\Gamma}$ is an $\frak{s}$-type, for $\frak{s}=\frak{s}_\xi$ the inertial equivalence class in $G^{[\Gamma]}$ of a supercuspidal representation of $M^{[\Gamma]}$, the Levi subgroup corresponding to $\Sigma^\Gamma$ \cite[Thm. 7.5]{KimYu2017}.  


If not, then it is not initially clear which inertial equivalence classes $\xi\otimes\kappa_{\Sigma^\Gamma}$ may intertwine with. In order to deal with this issue, we carry out a process established in \cite[7.2--7.5]{LathamNevins2020} in order to construct, for each component $\xi$, an open subgroup $J_\xi$ of $J(\Sigma^\Gamma)$ and an irreducible component $\lambda_\xi$ of $\xi\otimes\kappa_{\Sigma^\Gamma}|_{J_\xi}$ such that $(J_\xi,\lambda_\xi)$ is an $\frak{s}_\xi$-type for some $G^{[\Gamma]}$-inertial equivalence class $\frak{s}_\xi$. Rather than repeat the arguments of \cite[7.2--7.5]{LathamNevins2020} \emph{mutatis mutandis}, we simply outline the key steps of the construction: 
\begin{itemize}
\item The representation $\xi$ is an irreducible representation of $J(\Sigma^\Gamma)$ that is trivial on $J_+(\Sigma^\Gamma)$, and therefore descends to an irreducible representation of $J(\Sigma^\Gamma)/J_+(\Sigma^\Gamma)\simeq G_{x}^{0,[\Gamma]}/G_{x,0+}^{0,[\Gamma]}$. By Clifford theory, further restriction gives a sum of pairwise $G_{x}^{0,[\Gamma]}$-conjugate irreducible representations of $G_{x,0:0+}^{0,[\Gamma]}$, which is a finite group of Lie type. Fix an element $\xi'$ of this $G_{x}^{0,[\Gamma]}$-orbit, and let $(\mathsf{L}_\xi',\zeta_\xi')$ denote a representative of its cuspidal support.
\item Using \cite[Lem. 7.2]{LathamNevins2020}, we associate to $\mathsf{L}_\xi'$ a point $u_\xi\in\buil(M^{0,[\Gamma]})$. The point $u_\xi$ in turn defines a Levi subgroup of $G^{0,[\Gamma]}$, which we denote by $L_\xi^0$.
\item Via the inclusion $G_{u_\xi}^{0,[\Gamma]}\hookrightarrow G_x^{0,[\Gamma]}$, one may choose an irreducible component $\zeta_\xi$ of $\xi|_{G_{u_\xi}^{0,[\Gamma]}}$ such that the restriction of $\zeta_\xi$ to $G_{u_\xi,0}^{0,[\Gamma]}$ is a sum of $G_{u_\xi}^{0,[\Gamma]}$-conjugates of $\zeta_\xi'$.
\item Set $\lambda_{\Sigma_\xi^\Gamma}=\zeta_\xi\otimes\kappa_{\Sigma^\Gamma}$. It then follows from \cite[Lem. 7.3]{LathamNevins2020} that the tuple $(\Sigma_\xi^\Gamma;\zeta_\xi)$ derived from $(\Sigma^\Gamma;\sigma)$ by replacing $x$ with $u_\xi$ and $\sigma$ with $\zeta_\xi$ 
is a datum with corresponding type $(J(\Sigma_\xi^\Gamma),\lambda_{\Sigma_\xi^\Gamma})$.
Moreover, \cite[Lem. 7.2]{LathamNevins2020} shows that $J(\Sigma_\xi^\Gamma)\subset J(\Sigma^\Gamma)$, and it follows from \cite[Prop. 7.5]{LathamNevins2020} that
\[\Hom_{J(\Sigma_\xi^\Gamma)}(\lambda_{\Sigma_\xi^\Gamma},\ \xi\otimes\kappa_{\Sigma^\Gamma})\neq 0.
\]
\end{itemize}
Let $\frak{s}_\xi$ denote the $G^{[\Gamma]}$-inertial equivalence class  such that $(J(\Sigma_\xi^\Gamma),\lambda_{\Sigma_\xi^\Gamma})$ is an $\frak{s}_\xi$-type.

\begin{theorem}\label{thm:main}
Let $\Sigma$ and $\pi$ be as above. For each irreducible component $\xi$ of $\sigma|_{J(\Sigma^\Gamma)}$, there exists an irreducible subquotient $\pi_\xi$ of $\pi|_{G^{[\Gamma]}}$ that has inertial support $\frak{s}_\xi$.
\end{theorem}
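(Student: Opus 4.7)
The plan is to trace a copy of $\lambda_{\Sigma_\xi^\Gamma}$ from the type $\lambda_\Sigma$ sitting inside $\pi$ down into $\pi|_{G^{[\Gamma]}}$, and then to produce an irreducible subquotient carrying that type by a standard Zorn's lemma argument.

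First, since $\pi$ is a subquotient of the finite-length representation $\mathrm{Ind}_P^G\ \pi_M$, it is admissible and lies in the Bernstein component for which $(J(\Sigma),\lambda_\Sigma)$ is a type; hence $\mathrm{Hom}_{J(\Sigma)}(\lambda_\Sigma,\pi)\neq 0$. Restricting to $J(\Sigma^\Gamma)\subset J(\Sigma)$ and invoking Corollary~\ref{cor:xi}, which identifies $\xi\otimes\kappa_{\Sigma^\Gamma}$ as an irreducible summand of $\lambda_\Sigma|_{J(\Sigma^\Gamma)}$, we find that $\pi$ contains $\xi\otimes\kappa_{\Sigma^\Gamma}$ as a $J(\Sigma^\Gamma)$-subrepresentation. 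The final bullet point recalled above then gives $\mathrm{Hom}_{J(\Sigma_\xi^\Gamma)}(\lambda_{\Sigma_\xi^\Gamma},\xi\otimes\kappa_{\Sigma^\Gamma})\neq 0$, and composing these containments produces a nonzero vector $v\in\pi$ transforming under $J(\Sigma_\xi^\Gamma)$ by $\lambda_{\Sigma_\xi^\Gamma}$.

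Next I would form the cyclic $G^{[\Gamma]}$-subrepresentation $W=\langle G^{[\Gamma]}\cdot v\rangle$ of $\pi|_{G^{[\Gamma]}}$. By Zorn's lemma, the poset of $G^{[\Gamma]}$-subrepresentations of $W$ that do not contain $v$ has a maximal element $X_0$. Every proper subrepresentation of $W/X_0$ misses the image of $v$, so the sum $Y/X_0$ of all such proper subrepresentations is still proper, making $\pi_\xi:=W/Y$ irreducible. By construction the image of $v$ in $\pi_\xi$ is a nonzero $\lambda_{\Sigma_\xi^\Gamma}$-isotypic vector, and since $(J(\Sigma_\xi^\Gamma),\lambda_{\Sigma_\xi^\Gamma})$ is an $\frak{s}_\xi$-type, $\pi_\xi$ has inertial support $\frak{s}_\xi$, as required.

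The main technical hurdle lies in justifying the last ingredient of the first paragraph: that the Hakim--Murnaghan-style construction described in the bullet points genuinely produces an $\frak{s}_\xi$-type sitting inside $\xi\otimes\kappa_{\Sigma^\Gamma}|_{J(\Sigma_\xi^\Gamma)}$, so that we really can chain the Hom-space nonvanishings together. This amounts to transposing the arguments of \cite[\S 7]{LathamNevins2020} to the present setting, where $G^{[\Gamma]}$ plays the role of the ambient group, and is where the bulk of the work sits; once this is in place, the Zorn extraction and inertial identification are formal.
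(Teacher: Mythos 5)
Your proposal is correct, and its core is the same as the paper's: both proofs chain $\Hom_{J(\Sigma)}(\lambda_\Sigma,\pi)\neq 0$ through Corollary~\ref{cor:xi} and the non-vanishing of $\Hom_{J(\Sigma_\xi^\Gamma)}(\lambda_{\Sigma_\xi^\Gamma},\xi\otimes\kappa_{\Sigma^\Gamma})$ to plant a copy of $\lambda_{\Sigma_\xi^\Gamma}$ inside $\pi$, and both treat the verification that $(J(\Sigma_\xi^\Gamma),\lambda_{\Sigma_\xi^\Gamma})$ is an $\frak{s}_\xi$-type as the real content, deferred to the adaptation of \cite[\S 7]{LathamNevins2020} outlined before the theorem. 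The only genuine divergence is the final extraction step. The paper projects $\pi|_{G^{[\Gamma]}}$ onto the Bernstein block $\Rep^{\frak{s}_\xi}(G^{[\Gamma]})$, asserts the image is finitely generated, and takes an irreducible quotient, which automatically has inertial support $\frak{s}_\xi$ because it lies in the block. You instead pass to the cyclic subrepresentation $W=\langle G^{[\Gamma]}\cdot v\rangle$ and run the Zorn argument so that $v$ survives into the quotient; this is more elementary, makes finite generation manifest (since $W$ is cyclic, sidestepping the paper's finite-generation claim for the block projection of the typically non-finitely-generated $\pi|_{G^{[\Gamma]}}$), and identifies the inertial support by exhibiting the type directly in the quotient. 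Two small remarks on your write-up: once $X_0$ is maximal among subrepresentations of $W$ not containing $v$, every nonzero subrepresentation of $W/X_0$ contains $\bar v$ and hence equals $W/X_0$ (as $\bar v$ generates), so $W/X_0$ is already irreducible and the passage to $Y$ is unnecessary --- and as stated your justification that $Y/X_0$ is proper (a sum of subrepresentations each missing $\bar v$ need not miss $\bar v$) only works because those subrepresentations are in fact all zero. Also, ``a nonzero vector transforming under $J(\Sigma_\xi^\Gamma)$ by $\lambda_{\Sigma_\xi^\Gamma}$'' should be read as a vector lying in a $J(\Sigma_\xi^\Gamma)$-subspace isomorphic to the irreducible $\lambda_{\Sigma_\xi^\Gamma}$; irreducibility is then what guarantees this subspace injects into the quotient. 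Neither point affects the validity of the argument.
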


\begin{proof}
Since $\pi$ contains $\lambda_{\Sigma}$, it contains a copy of $\lambda_{\Sigma_\xi^{\Gamma}}$ for each representation $\xi$, which implies that $\pi|_{G^{[\Gamma]}}$ has a non-trivial projection onto $\Rep_{\lambda_{\Sigma_\xi^\Gamma}}(G^{[\Gamma]})$, the category of smooth representations generated by their $\lambda_{\Sigma_\xi^\Gamma}$-isotypic vectors, for each $\xi$. As the pair $(J(\Sigma_\xi^\Gamma),\lambda_{\Sigma_\xi^\Gamma})$ is an $\frak{s}_\xi$-type, one has $\Rep_{\lambda_{\Sigma_\xi^\Gamma}}(G^{[\Gamma]})=\Rep^{\frak{s}_\xi}(G^{[\Gamma]})$, the corresponding Bernstein block. The representation $\pi$ therefore has non-trivial projection onto this block, and the image of $\pi$ under this projection, being finitely generated, admits an irreducible quotient. By construction, any such irreducible quotient has inertial support $\frak{s}_\xi$.
\end{proof}

\begin{remark}
In general, this does not provide an exhaustive list of the inertial supports of representations occuring in $\pi|_{G^{[\Gamma]}}$. The finite-length representation $\pi$ is generated by its $\lambda_{\Sigma}$-isotypic vectors, hence by its vectors on which $J(\Sigma^\Gamma)$ acts as $\xi\otimes\kappa_{\Sigma^\Gamma}$ for some $\xi$, but only as a representation of $G$. In general, one may apply Mackey's theorem to say that
\[\pi|_{G^{[\Gamma]}}\hookrightarrow\Res_{G^{[\Gamma]}}^G\cInd_{J(\Sigma)}^G\ \lambda_\Sigma = \bigoplus_{G^{[\Gamma]}\backslash G/J(\Sigma)}\cInd_{^gJ(\Sigma)\cap G^{[\Gamma]}}^{G^{[\Gamma]}}\ ^g\lambda_{\Sigma}.
\]
What we have done is describe (exhaustively) the inertial equivalence classes of those subrepresentations of $\pi|_{G^{[\Gamma]}}$ which are contained within the trivial Mackey summand
\[\cInd_{J(\Sigma)\cap G^{[\Gamma]}}^{G^{[\Gamma]}}\ \lambda_{\Sigma}.
\]
However, the other summands are far less amenable to description.   For example, while it is true that for any $g\in G$, conjugation of the pair $(J(\Sigma),\lambda_{\Sigma})$ gives the pair $(J({}^g\Sigma),\lambda_{{}^g\tilde{\Sigma}})$, where $^g\Sigma$ is still a datum for $G$, the datum $^g\Sigma$ need not be $\Gamma$-stable. In any situations where $^g\Sigma$ \emph{is} $\Gamma$-stable, then we may similarly describe the inertial equivalence classes of representations occurring within the Mackey summand
\[\cInd_{^gJ(\Sigma)\cap G^{[\Gamma]}}^{G^{[\Gamma]}}\ ^g\lambda_{\Sigma}.
\]
One expects, however, that many of these summands do not correspond to $\Gamma$-stable data.
\end{remark}

\providecommand{\bysame}{\leavevmode\hbox to3em{\hrulefill}\thinspace}
\providecommand{\MR}{\relax\ifhmode\unskip\space\fi MR }
\providecommand{\MRhref}[2]{%
  \href{http://www.ams.org/mathscinet-getitem?mr=#1}{#2}
}
\providecommand{\href}[2]{#2}

\end{document}